\newtheorem{theorem}{Theorem}[section]
\newtheorem{proposition}[theorem]{Proposition}
\newtheorem{corollary}[theorem]{Corollary}
\newtheorem{lemma}[theorem]{Lemma}
\theoremstyle{definition}
\theoremstyle{remark}
\newtheorem{remark}[theorem]{Remark}
\numberwithin{equation}{section}
\newcommand{\eps}{\epsilon}
\def\C{\mathbb{C}}
\def\N{\mathbb{N}}
\def\RR{\mathbb{R}}
\def\R{\mathbb{R}}
\newcommand{\zN}{{\bf z({\text{\tiny $N$}})}}
\newcommand{\zE}{{\bf z({\text{\footnotesize $3$}})}}
\newcommand{\cA}{{\mathcal A}}
\newcommand{\cB}{{\mathcal B}}
\newcommand{\cC}{{\mathcal C}}
\newcommand{\cF}{{\mathcal F}}
\newcommand{\cL}{{\mathcal L}}
\newcommand{\cR}{{\mathcal R}}
\newcommand{\cS}{{\mathcal S}}
\def\wh{\widehat}
\newcommand\minus\backslash
\newcommand{\id}{{\rm id}}
\newcommand\lan\langle
\newcommand\ran\rangle
\newcommand{\supp}{\operatorname{supp}}
\renewcommand\leq\leqslant
\renewcommand\geq\geqslant
\newlength{\intwidth}
\newcommand{\norm}[1]{\|#1\|}
\renewcommand{\phi}{\varphi}
\begin{document}

\title[Nonlinear Helmholtz equation ]{Complex solutions and stationary scattering for the nonlinear Helmholtz equation
   }

 \author{Huyuan Chen}
 \address{Department of Mathematics, Jiangxi Normal University, Nanchang,\\
Jiangxi 330022, PR China}
 \email{ chenhuyuan@yeah.net}

 \author{Gilles Ev\'equoz }
 \address{School of Engineering, University of Applied Sciences of Western Switzerland, Route du Rawil 47,\\
1950 Sion, Switzerland}
 \email{ gilles.evequoz@hevs.ch}

 \author{Tobias Weth}
 \address{Goethe-Universit\"{a}t Frankfurt, Institut f\"{u}r Mathematik, Robert-Mayer-Str. 10\\
D-60629 Frankfurt, Germany }
 \email{ weth@math.uni-frankurt.de}

%
%
\begin{abstract}
We study a stationary scattering problem related to the nonlinear Helmholtz equation
$
- \Delta u -k^2 u = f(x,u) \ \ \text{in $\R^N$,}
$
where $N \ge 3$ and $k>0$. For a given incident free wave $\phi \in L^\infty(\R^N)$, we prove the existence of complex-valued solutions of the form $u=\varphi+u_{\text{sc}}$, where $u_{\text{sc}}$ satisfies the Sommerfeld outgoing radiation condition. Since neither a variational framework nor maximum principles are available for this problem, we use topological fixed point theory and global bifurcation theory to solve an associated integral equation involving the Helmholtz resolvent operator. The key step of this approach is the proof of suitable a priori bounds.  
\end{abstract}
\maketitle

\section{Introduction}

A basic model for wave propagation in an ambient medium with nonlinear response is provided by the nonlinear wave equation
\begin{equation}\label{nlw0}
\frac{\partial^2 \psi}{\partial t^2}(t,x) - \Delta \psi(t,x)  =f(x,\psi(t,x)), \qquad (t,x)\in\R\times\R^N.
\end{equation}
Considering nonlinearities of the form $f(x,\psi)=g(x,|\psi|^2)\psi$, 
where $g$ is a real-valued function, the time-periodic ansatz 
\begin{equation}
\label{psi-ansatz}
	\psi(t,x)=e^{-i k t}u(x), \qquad k >0
\end{equation}
leads to the nonlinear Helmholtz equation
\begin{equation}\label{nlh-1}
- \Delta u -k^2 u = f(x,u) \qquad \text{in $\R^N$.}
\end{equation} 
Assuming in this model that nonlinear interactions occur only locally in space. 
 we are lead to restrict our attention to nonlinearities $f \in C(\R^N \times \C,\C)$ with $\lim \limits_{|x| \to \infty}f(x,u)=0$ for every $u \in \R$. 
The stationary scattering problem then consists in analyzing solutions of the form $u=\varphi+u_{\text{sc}}$, where $\varphi$ is a solution of the homogeneous Helmholtz equation $-\Delta \phi - k^2 \phi=0$ and $u_{\text{sc}}$ obeys the Sommerfeld outgoing radiation condition
\begin{equation}\label{sommerfeld-1}
	r^{\frac{N-1}{2}}\left|\frac{\partial u_{\text{sc}}}{\partial r}-iku_{\text{sc}} \right|\to 0\quad\text{as }r=|x| \to\infty
\end{equation}
or a suitable variant of it. The function $\varphi$ represents a given {\em incident free wave} whose interaction with the nonlinear ambient medium gives 
rise to a scattered wave $u_{\text{sc}}$. Usually, $\varphi$ is chosen as a plane wave 
\begin{equation}
  \label{eq:plane-wave}
\varphi(x)=e^{ik\:x\cdot \xi},\qquad  \xi\in S^{N-1}
\end{equation}
or as superposition of plane waves. To justify the notions of incident and scattered wave, let us assume for the moment that the nonlinearity is compactly supported in the space variable $x$. Then $u_{\text{sc}}$ has the asymptotics $u_{\text{sc}}(x)= r^{\frac{1-N}{2}}e^{i kr}g(\frac{x}{|x|})+ o(r^{\frac{1-N}{2}})$ as $r= |x| \to \infty$ with a function $g: S^{N-1} \to \C$ (see \cite[Theorem 2.5]{colton-kress} and \cite[Proposition 2.6]{EW1}). For incident plane waves $\phi$ as in \eqref{eq:plane-wave}, this leads to the asymptotic expansion 
\begin{equation*}
	\psi(t,x)=e^{i k (x \cdot \xi-t)}+ r^{\frac{1-N}{2}}e^{i k(r-t)}g(\frac{x}{|x|})
+o(r^{\frac{1-N}{2}}) \qquad \text{as $r= |x| \to \infty$}
\end{equation*}
uniformly in $t \in \R$ for the corresponding time periodic solution given by the ansatz~\eqref{psi-ansatz}. This expansion clearly shows the asymptotic decomposition of the wave function $\psi$ in two parts, of which one is propagating with constant speed $k$ in the given direction $\xi$ and the other part is outward radiating in the radial direction. For a more detailed discussion of the connection of notions of stationary and dynamical scattering, we refer the reader to \cite{komech} and the references therein.  

\qquad In the (affine) linear case $f(x,u)=a(x) u+b(x)$, both the forward and the inverse stationary scattering problem
have been extensively studied and are reasonably well understood from a functional analytic point of view (see e.g. \cite{colton-kress} and the references therein).  In contrast, the nonlinear setting remains widely unexplored, although it appears in important models driven by applications and therefore is receiving fastly growing attention in recent years. Specifically, we mention the modeling of propagation and scattering of electromagnetic waves in localized nonlinear Kerr media as considered e.g. in \cite{fibich-tsynkov:2005,baruch-fibich-tsynkov:2009,wu-zou}. In this context, the nonlinear Helmholtz equation arises from a reduction of Maxwell's equations in the case of a linearly polarized electric field after elimination of the corresponding magnetic field. As noted in \cite{wu-zou}, this leads to a special case of equation (\ref{nlh-1}) given by
  $$
- \Delta u -k^2 u = \rho 1_{\Omega}|u|^2 u \qquad \text{in $\R^N$.}  
  $$
Here $\Omega \subset \R^N$ is the support of the nonlinear Kerr medium and $\rho$ is the Kerr constant given as quotient of the Kerr coefficient of the medium and the index of refraction of the ambient homogeneous medium. Both from a theoretical and an applied point of view, it is of great interest to understand self-focusing and scattering effects of laser beams interacting with localized nonlinear media, and computational approaches to these questions have been developed e.g. in \cite{fibich-tsynkov:2005,baruch-fibich-tsynkov:2009,wu-zou}.    

From a theoretical point of view, the current understanding of the stationary scattering problem for (\ref{nlh-1}) is 
mainly restricted to the case of small incident waves $\phi$ which
can be reduced to a perturbation of an associated linear problem in suitable function spaces. In this case, existence and well-posedness results have been obtained by Guti\'errez \cite{G}, Jalade \cite{J} and Gell-Redman et al. \cite{Gell-Red}. In \cite{J}, the scattering problem is studied for a small incident
plane wave and a family of compactly supported nonlinearities in dimension $N=3$. 
The main result in \cite{G} yields, in dimensions $N=3,4$, the existence of solutions
 to the scattering problem with small incident Herglotz wave $\phi$ and cubic power nonlinearity. We recall that a Herglotz wave is a function of the type 
\begin{equation}
   \label{eq:18}
x \mapsto \phi(x):= \int_{S^{N-1}} e^{i k(x\cdot \xi)} g (\xi)\,d \sigma(\xi) \qquad
\text{for some function $g \in L^2(S^{N-1})$.}
\end{equation}
Since plane waves of the form (\ref{eq:plane-wave}) cannot be written in this way, they are not admitted in \cite{G}. On the other hand, no asymptotic decay of the nonlinearity is required for the approach developed in \cite{G}. This is also the case for the approach in \cite{Gell-Red}, where more general nonlinearities are considered, while the class of admissible incident Herglotz waves $\phi$ is restricted by assuming smallness measured in higher Sobolev norms on $S^{N-1}$.

\qquad The main reason for the smallness assumption in the papers \cite{G,J,Gell-Red} is the use of contraction mappings together with resolvent estimates for the Helmholtz operator.  
The main aim of this paper is to remove this smallness assumption by means of different tools from nonlinear analysis and new a priori estimates on the set of solutions. More precisely, for a given solution $\phi \in L^\infty(\R^N)$ of the homogeneous Helmholtz equation 
$\Delta \phi + \phi= 0$ which we shall refer to as {\em incident free wave} in the following, we wish to find solutions of (\ref{nlh-1}) of the form 
 $u = \phi + u_{sc} \in L^\infty(\R^N)$ with $u_{sc}$ satisfying  (\ref{sommerfeld-1}) or a suitable variant of this radiation condition. This problem can be reduced to an integral equation involving the  
Helmholtz resolvent operator $\cR_k$, which is formally given as a convolution $\cR_k f= \Phi_k * f$ with the fundamental solution 
\begin{equation}\label{eqn:fund_sol}
\Phi_k : \R^N \setminus \{0\} \to \C, \qquad   \Phi_k(x)=\frac{i}{4} \Bigl(\frac{k}{2\pi |x|}\Bigr)^{\frac{N-2}{2}}H^{(1)}_{\frac{N-2}{2}}(k|x|)
\end{equation}
associated to (\ref{sommerfeld-1}). Here $H^{(1)}_{\frac{N-2}{2}}$ is the 
Hankel function of the first kind of order $\frac{N-2}{2}$, see e.g. \cite{AS}. It is easy to see from the asymptotics of $H^{(1)}_{\frac{N-2}{2}}$ that $\Phi_k$ satisfies (\ref{sommerfeld-1}), and the same is true for $u:= \cR_k h= \Phi_k * h$ e.g. in the case where $h \in L^\infty(\R^N)$ has compact support. 

\qquad By the estimate in \cite[Theorem 8]{G} and the remark following it, an integral variant of (\ref{sommerfeld-1}) is available under weaker assumptions on $h$. More precisely, if $N=3,4$ and $1 < p  \le \frac{2(N+1)}{N+3}$ or $N \ge 5$ and $\frac{2N}{N+4} \le p \le \frac{2(N+1)}{N+3}$, then, for $h \in L^p(\R^N)$, the function $u= \cR_k h$ is a well-defined strong solution of the inhomogeneous Helmholtz equation $-\Delta u - k^2 u = h$ satisfying the following variant of the Sommerfeld outgoing radiation condition: 
\begin{equation}\label{eqn:sommerfeld1-averaged}
\lim_{R\to\infty}\frac{1}{R} \int_{B_R}\left|\nabla
u(x)-iku(x)\frac{x}{|x|} \right|^2\, dx=0.
\end{equation}
 Hence, under appropriate assumptions on the nonlinearity $f$, we are led to study the integral equation 
\begin{equation}\label{nlh-1-integral}
u = \cR_k(N_f(u))+ \phi \qquad \text{in $L^\infty(\R^N)$}
\end{equation} 
for a given incident free wave $\phi \in L^\infty(\R^N)$. Here $N_f$ is the substitution operator associated to $f$ given by $N_f(u)(x):= f(x,u(x))$. 

\qquad To state our main results we need to introduce some more notation. It is convenient to define $\langle x \rangle  = (1+|x|^2)^{\frac{1}{2}}$ for $x \in \R^N$.
For $\alpha\in\R$ and a measurable subset $A \subset \R^N$, we consider the Banach space $L^\infty_\alpha(A)$ of measurable functions $w: A \to \C$ with
 $$
\norm{w}_{L^\infty_\alpha(A)}:=\| \langle \,\cdot\, \rangle^{\alpha} w\|_{L^\infty(A)
} <+\infty.
$$
In particular, $L^\infty(A)=L^\infty_0(A)$. In the case $A= \R^N$, we merely write $\|\cdot\|_{L^\infty_\alpha}$ in place of $\|\cdot\|_{L^\infty_\alpha(\R^N)}$. For subspaces of real-valued functions, we use the notations $L^p(A,\R)$ for $1 \le p \le \infty$ and $L^\infty_\alpha(A,\R)$. We first note the following preliminary observation regarding properties of the resolvent operator $\cR_k$.

\begin{proposition}\label{resolvent-compact-and-continuous}
Let $N\geq2$,  $\alpha>\frac{N+1}{2}$ and  $\tau(\alpha)$ be defined by 
\begin{align} \label{exp 1}
\tau(\alpha) &=
\begin{cases} \alpha-\frac{N+1}{2}\quad&{\rm if}\ \, \frac{N+1}{2}<\alpha<N, \\[1.5mm]
 \frac{N-1}{2}\quad&{\rm if}\ \, \alpha\geq N.
 \end{cases}
\end{align}
Then we have 
\begin{equation}
  \label{eq:kappa-sigma-finite}
\kappa_{\alpha}:= \sup \Big\{ \bigl \| |\Phi_k| * w \bigr \|_{L^\infty_{\tau(\alpha)}}:\: w \in L^\infty_\alpha(\RR^N),\: \| w\|_{L^\infty_{\alpha }}=1 \Big\}
< \infty,
\end{equation}
so $\cR_k$ defines a bounded linear map $L^\infty_\alpha(\RR^N) \to L^\infty_{\tau(\alpha)}(\RR^N)$. Moreover:
\begin{enumerate}
\item[(i)] The resolvent operator defines a compact linear map $\cR_k: L^\infty_\alpha(\RR^N) \to L^\infty(\RR^N)$.\\
\item[(ii)] If $\alpha > \frac{N(N+3)}{2(N+1)}$ and $h \in L^\infty_{\alpha}(\R^N)$, then the function $u:= \cR_k h$ is a strong solution of $-\Delta u - k^2 u = h$ satisfying~(\ref{eqn:sommerfeld1-averaged}). If $\alpha > N$, then $u$ satisfies~(\ref{sommerfeld-1}).
\end{enumerate}
\end{proposition}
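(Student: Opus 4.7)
The heart of the proposition is the weighted convolution estimate $\kappa_\alpha < \infty$; once this is available, both assertions (i) and (ii) follow by combining standard properties of the Helmholtz fundamental solution with the resolvent bound cited from Guti\'errez in the introduction. My plan is therefore to first establish the finiteness of $\kappa_\alpha$, and then deduce (i) and (ii) from it.

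To bound $\kappa_\alpha$, I would work from the pointwise control provided by the Hankel function asymptotics, namely $|\Phi_k(z)| \leq C|z|^{-(N-2)}$ for $|z| \leq 1$ (with the usual logarithmic modification when $N=2$) and $|\Phi_k(z)| \leq C\langle z \rangle^{-(N-1)/2}$ globally. Fix $x \in \R^N$ with $R := |x|$ large (the range $R$ bounded being harmless, since the convolution is continuous and $|\Phi_k|$ is locally integrable), and decompose $\R^N$ into three regions:
\begin{equation*}
A = \{y : |y - x| \leq R/2\}, \qquad B = \{y : |y| \leq R/2\}, \qquad C = \R^N \setminus (A \cup B).
\end{equation*}
On $A$ one has $\langle y \rangle \sim R$, so the integral reduces to $R^{-\alpha}\int_{|z|\leq R/2} |\Phi_k(z)|\, dz \lesssim R^{-\alpha}(1 + R^{(N+1)/2})$. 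On $B$ one has $|x - y| \sim R$, so $|\Phi_k(x-y)| \lesssim R^{-(N-1)/2}$ uniformly, reducing the bound to $R^{-(N-1)/2} \int_{|y|\leq R/2} \langle y\rangle^{-\alpha}\, dy$, whose size depends on whether $\alpha$ exceeds $N$. On $C$ both $|x-y|$ and $|y|$ are $\gtrsim R$, and a direct computation in polar coordinates (using $\alpha > (N+1)/2$ to ensure convergence at infinity) yields a contribution of order $R^{-\tau(\alpha)}$. Summing the three terms gives the decay promised in \eqref{eq:kappa-sigma-finite}, with the piecewise definition of $\tau$ reflecting which of the three regions dominates.

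For part (i), the inequality $\|\cR_k w\|_{L^\infty_{\tau(\alpha)}} \leq \kappa_\alpha \|w\|_{L^\infty_\alpha}$ gives both uniform boundedness in $L^\infty(\R^N)$ and, since $\tau(\alpha) > 0$, uniform vanishing at infinity for the image of the unit ball of $L^\infty_\alpha$. It remains to prove equicontinuity, which I would obtain by splitting $\cR_k w(x) - \cR_k w(x')$ into the part over $\{|y - x| \leq 2|x-x'|\}$ (absorbed using the integrable singularity of $\Phi_k$) and its complement (treated via the mean value theorem, using $|\nabla \Phi_k(z)| \lesssim |z|^{-(N-1)}$ near zero and $|z|^{-(N-1)/2}$ at infinity). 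Arzel\`a--Ascoli in $C_0(\R^N)$ then yields the compactness of $\cR_k\colon L^\infty_\alpha(\R^N) \to L^\infty(\R^N)$. For part (ii), when $\alpha > N(N+3)/(2(N+1))$ the choice $p = 2(N+1)/(N+3)$ satisfies $\alpha p > N$, so $L^\infty_\alpha(\R^N) \hookrightarrow L^p(\R^N)$, and the Guti\'errez estimate cited in the introduction directly gives the strong-solution property and the averaged radiation condition \eqref{eqn:sommerfeld1-averaged}. When $\alpha > N$, one additionally has $h \in L^1(\R^N) \cap L^\infty(\R^N)$, and the pointwise condition \eqref{sommerfeld-1} follows from differentiating $\cR_k h = \Phi_k \ast h$ together with the asymptotic $\Phi_k(x) \sim c|x|^{-(N-1)/2} e^{ik|x|}$ as $|x| \to \infty$.

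I expect the principal obstacle to lie in the bookkeeping within the bound for $\kappa_\alpha$, especially at the transitional value $\alpha = N$, where a naive estimate on region $B$ produces a logarithmic loss that must be avoided or absorbed by a sharper decomposition, so that the clean exponent $\tau(\alpha) = (N-1)/2$ promised by the proposition really does hold at the boundary.
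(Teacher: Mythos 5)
Your proof is correct and essentially follows the paper's own route: Lemma~2.1 there uses the same pointwise Hankel asymptotics for $\Phi_k$ and $\nabla\Phi_k$, decomposes the convolution integral dyadically relative to $x$ (four pieces, with the local singularity separated out, versus your three), proves the analogous $L^\infty_{\tau(\alpha)}$ bound for $|\nabla\Phi_k|*w$, and then obtains compactness via Arzel\`a--Ascoli together with the uniform weighted decay, while part~(ii) comes from the embedding $L^\infty_\alpha \hookrightarrow L^{\frac{2(N+1)}{N+3}}(\R^N)$ and the cited resolvent estimate exactly as you propose. Your closing worry about a logarithmic loss at $\alpha=N$ is in fact well-founded and shared by the paper's own argument (the term $I_3$ there picks up a factor $\log|x|$ when $\alpha=N$); taking $w(y)=\langle y\rangle^{-N}$ and the lower bound $|\Phi_k(z)|\gtrsim |z|^{-\frac{N-1}{2}}$ for large $|z|$ even shows $\kappa_N=\infty$ with $\tau(N)=\frac{N-1}{2}$, so the clean borderline exponent really requires $\alpha>N$ strictly, a harmless restriction since one may always decrease $\alpha$ slightly in the applications.
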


\qquad Our first main existence result is concerned with linearly bounded nonlinearities $f$. 
 
\begin{theorem} \label{W teo 1-sublinear}
Let, for some $\alpha>\frac{N+1}{2}$, the nonlinearity $f: \R^N \times \C \to \C$ be a continuous function satisfying 
\begin{equation}
  \label{eq:assumption-f1}
 \sup_{|u|\le M,x \in \R^N} \langle x \rangle^{\alpha}|f(x,u)|< \infty \qquad \text{for all $M >0$.}   
\end{equation}
Moreover, suppose that \underline{one} of the following assumptions is satisfied: 
\begin{enumerate}
 \item[$(f_1)$] The nonlinearity is of the form $f(x,u)= a(x) u + b(x,u)$ with $a \in L^\infty_\alpha(\R^N,\R)$ and 
$$
\sup \limits_{|u|\le M, x \in \R^N} \langle x \rangle^{\alpha} |b(x,u)|= o(M) \qquad \text{as $M \to +\infty.$} 
$$
\item[$(f_2)$] There exists $Q,b \in L^\infty_{\alpha}(\R^N,\R)$ with $\|Q\|_{L^\infty_\alpha} < \frac{1}{\kappa_\alpha}$, where $\kappa_\alpha$ is given in (\ref{eq:kappa-sigma-finite}), and  
\begin{equation*}
|f(x,u)|\leq Q(x) |u|+b(x)\qquad\text{for all }\, (x,u)\in \R^N \times \C.
\end{equation*}
\end{enumerate}
Then, for any given solution $\phi \in L^\infty(\R^N)$ of the homogeneous Helmholtz equation $\Delta \phi + k^2 \phi = 0$, the equation (\ref{nlh-1-integral}) admits a solution $u \in L^\infty(\R^N)$. 
\end{theorem}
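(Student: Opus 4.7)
The plan is to recast \eqref{nlh-1-integral} as a fixed-point problem for
$$T: L^\infty(\R^N) \to L^\infty(\R^N),\qquad T(u):=\cR_k(N_f(u))+\phi,$$
and apply Schaefer's fixed-point theorem (equivalently, the Leray--Schauder alternative). By Proposition~\ref{resolvent-compact-and-continuous}(i), $\cR_k$ is compact from $L^\infty_\alpha$ to $L^\infty$, and the bound \eqref{eq:assumption-f1} ensures that $N_f$ maps bounded sets of $L^\infty$ into bounded sets of $L^\infty_\alpha$. Continuity of $N_f:L^\infty\to L^\infty_\alpha$ follows from the continuity of $f$ by splitting $\R^N$ into a large ball (local uniform continuity on the relevant compact set) and its complement (where the common weight $\langle x\rangle^{-\alpha}$ provides uniform smallness). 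Hence $T$ is continuous and compact, and the task reduces to proving a uniform a priori bound on the set $\{u\in L^\infty(\R^N):u=tT(u)\text{ for some }t\in[0,1]\}$.

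Case $(f_2)$ is immediate: using $|\cR_k w|\le|\Phi_k|\ast|w|$, the fact that $\tau(\alpha)\ge 0$, and \eqref{eq:kappa-sigma-finite}, any solution of $u=tT(u)$ satisfies
$$\|u\|_{L^\infty}\le t\kappa_\alpha\|N_f(u)\|_{L^\infty_\alpha}+t\|\phi\|_{L^\infty}\le t\kappa_\alpha\bigl(\|Q\|_{L^\infty_\alpha}\|u\|_{L^\infty}+\|b\|_{L^\infty_\alpha}\bigr)+t\|\phi\|_{L^\infty},$$
and the smallness hypothesis $\kappa_\alpha\|Q\|_{L^\infty_\alpha}<1$ then yields a bound independent of $t$.

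Case $(f_1)$ is harder since no smallness of $a$ is imposed; I would argue by contradiction. Given a sequence $(t_n,u_n)$ with $u_n=t_nT(u_n)$ and $M_n:=\|u_n\|_{L^\infty}\to\infty$, pass to a subsequence with $t_n\to t_0\in[0,1]$ and rescale $v_n:=u_n/M_n$. Dividing the fixed-point equation by $M_n$, the forcing term $t_nM_n^{-1}\phi$ vanishes in $L^\infty$; the sublinear term satisfies
$$\|M_n^{-1}\cR_k(b(\cdot,u_n))\|_{L^\infty}\le\kappa_\alpha M_n^{-1}\|b(\cdot,u_n)\|_{L^\infty_\alpha}\to 0$$
by the hypothesis on $b$; and the map $u\mapsto\cR_k(au)$ is compact on $L^\infty$, because multiplication by $a\in L^\infty_\alpha$ sends $L^\infty$ into $L^\infty_\alpha$ before applying the compact operator $\cR_k$. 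Extracting a further subsequence, $v_n\to v$ in $L^\infty$ with $\|v\|_{L^\infty}=1$ and $v=t_0\cR_k(av)$, forcing $t_0>0$. Proposition~\ref{resolvent-compact-and-continuous}(ii), applied to $h=t_0av\in L^\infty_\alpha$ (using $\alpha>\tfrac{N(N+3)}{2(N+1)}$), identifies $v$ as a bounded strong solution of $-\Delta v-k^2v=t_0 av$ satisfying \eqref{eqn:sommerfeld1-averaged}. Since $\alpha>1$ for all $N\ge 3$, the decay $a(x)=O(\langle x\rangle^{-\alpha})$ places this in the regime of a Rellich--Kato type absence-of-embedded-positive-eigenvalues theorem, which forces $v\equiv 0$ and contradicts $\|v\|_{L^\infty}=1$.

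The main anticipated obstacle is precisely this last unique-continuation step: classical Rellich--Kato arguments are usually phrased with the pointwise radiation condition \eqref{sommerfeld-1} in mind, whereas Proposition~\ref{resolvent-compact-and-continuous}(ii) only provides the averaged version \eqref{eqn:sommerfeld1-averaged} in the range $\tfrac{N(N+3)}{2(N+1)}<\alpha\le N$. Bridging this gap -- either by a bootstrap using the $\langle x\rangle^{-\alpha}$-decay of the effective potential $t_0 a$ to upgrade $v$'s asymptotics, or by invoking a uniqueness result compatible with the weaker radiation condition -- is the technically delicate heart of the argument.
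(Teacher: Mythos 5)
Your proposal follows the same overall route as the paper: recast the problem as a compact fixed-point problem and apply Schaefer's theorem (the paper indeed invokes Schaefer), prove the $(f_2)$ bound directly via $\kappa_\alpha$, and prove the $(f_1)$ bound by a blow-up argument that passes to a limiting linear integral equation $v=t_0\cR_k(av)$. The structure of the blow-up step is exactly the paper's, down to using compactness of $u\mapsto\cR_k(au)$ and Proposition~\ref{resolvent-compact-and-continuous}(ii) to identify $v$ as a strong solution with the averaged radiation condition~\eqref{eqn:sommerfeld1-averaged}.

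The gap you correctly anticipate at the end is genuine, but it is filled in the paper by Proposition~\ref{corol-kato} (applied with $p=2$ and $Q=t_0 a$), whose mechanism is your second suggested option and requires no upgrade of asymptotics. The key is that the Alsholm--Schmidt strengthening of Kato's theorem (Proposition~\ref{prop:kato}) is stated in terms of the averaged quantity $\liminf_{R\to\infty}\frac1R\int_{B_R}(|\nabla v|^2+k^2|v|^2)\,dx=0$, not in terms of any pointwise Sommerfeld decay. To get from~\eqref{eqn:sommerfeld1-averaged} to that hypothesis, one expands the square and observes
\begin{align*}
\lim_{R\to\infty}\frac1R\Bigl\{\int_{B_R}(|\nabla v|^2+k^2|v|^2)\,dx
-2k\int_0^R\operatorname{Im}\int_{\partial B_\rho}\bar v\,\nabla v\cdot\tfrac{x}{|x|}\,d\sigma\,d\rho\Bigr\}=0,
\end{align*}
and then uses the divergence theorem on $B_\rho$: since $v$ solves $-\Delta v-k^2v=t_0av$ with $t_0a$ \emph{real-valued}, the flux $\int_{\partial B_\rho}\bar v\,\nabla v\cdot\frac{x}{|x|}\,d\sigma=\int_{B_\rho}|\nabla v|^2-\int_{B_\rho}(k^2+t_0a)|v|^2$ is real, so its imaginary part vanishes for every $\rho$. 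This is the only place where real-valuedness of $a$ enters, and it is indispensable. Thus your $(f_1)$ argument is not merely ``a gap that needs bridging'' in principle — the paper hands you the precise lemma. One minor technical correction: continuity of $N_f$ is obtained into $L^\infty_{\alpha'}$ for some $\alpha'\in(\frac{N+1}{2},\alpha)$ rather than into $L^\infty_\alpha$ itself, since the tail estimate requires $\alpha'<\alpha$; this is harmless because Proposition~\ref{resolvent-compact-and-continuous} already applies with any weight exceeding $\frac{N+1}{2}$.
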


\begin{remark}
(i) In many semilinear elliptic problems with asymptotically linear nonlinearities as in assumption $(f_1)$, additional nonresonance conditions have to be assumed to guarantee a priori bounds which eventually lead to the existence of solutions. This is not the case in the present scattering problem. We shall establish a priori bounds merely as a consequence of $(f_1)$ by means of suitable nonexistence results for solutions of the linear Helmholtz equation satisfying the radiation condition~(\ref{eqn:sommerfeld1-averaged}). The key assumption here is that the function $a$ in $(f_1)$ is real-valued. 

\qquad $(ii)$ Theorem~\ref{W teo 1-sublinear} leaves open the question of uniqueness of solutions to (\ref{nlh-1-integral}). In fact, under the sole assumptions of Theorem~\ref{W teo 1-sublinear}, uniqueness is not to be expected. If, however, for some $\alpha>\frac{N+1}{2}$, the nonlinearity $f \in C(\R^N \times \R, \R)$ satisfies (\ref{eq:assumption-f1}) and the Lipschitz condition 
\begin{equation}
  \label{eq:assumption-f1-lipschitz}
\ell_\alpha:= \sup \Bigl \{ \langle x \rangle^{\alpha} \, \Bigl|\frac{f(x,u)-f(x,v)}{u-v}\Bigr|\::\: u,v \in \R, \: x \in \R^N \Bigr\} < \frac{1}{\kappa_\alpha},   
\end{equation}
then the contraction mapping principle readily yields the existence of a unique solution $u \in L^\infty(\R^N)$ of (\ref{nlh-1-integral}) for given $\phi \in L^\infty(\R^N)$, see Theorem~\ref{theo-uniqueness} below.
\end{remark}

\qquad Next we turn our attention to superlinear nonlinearities which do not satisfy $(f_1)$ or $(f_2)$. Assuming additional regularity estimates for $f$, we can still prove the existence of solutions of (\ref{nlh-1-integral}) in the case where $\|\phi\|_{L^\infty(\R^N)}$ is small. More precisely, we have the following. 

\begin{theorem} \label{teo-implicit-function}
Let, for some $\alpha>\frac{N+1}{2}$, the nonlinearity $f: \R^N \times \C \to \C$ be a continuous function satisfying 
(\ref{eq:assumption-f1}). Suppose moreover that the function $f(x,\cdot):\C \to \C$ is real differentiable for every $x \in \R^N$, and that $f':= \partial_u f: \R^N \times \C \to \cL_{\R}(\C,\C)$ is a continuous function satisfying 
\begin{equation}
  \label{eq:assumption-f1-diff}
 \sup_{|u|\le M,x \in \R^N} \langle x \rangle^{\alpha}\|f'(x,u)\|_{\cL_{\R}(\C,\C)}< \infty.   
\end{equation}
Finally, suppose that $f(x,0)=0$ and $f'(x,0)= 0 \in \cL_\R(\C,\C)$ for all $x \in \R^N$. 

\qquad Then there exists open neighborhoods $U,V \subset L^\infty(\R^N)$ of zero with the property that for every $\phi \in V$ there exists a unique solution $u= u_\phi \in U$ of (\ref{nlh-1-integral}). Moreover, the map $V \to U$, $u \mapsto u_\phi$ is of class $C^1$. 
\end{theorem}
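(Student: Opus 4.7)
My plan is to apply the Banach-space implicit function theorem to the map
\[
F : L^\infty(\R^N) \times L^\infty(\R^N) \to L^\infty(\R^N), \qquad F(\phi,u) := u - \phi - \cR_k\bigl(N_f(u)\bigr),
\]
where all spaces are viewed as real Banach spaces since $f$ is only assumed to be real-differentiable. Zeros of $F$ correspond precisely to solutions of (\ref{nlh-1-integral}), and $(0,0)$ will be the base point: the hypothesis $f(x,0)=0$ gives $N_f(0)\equiv 0$, hence $F(0,0)=0$.

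The key technical step is to show that $F$ is of class $C^1$ on a neighborhood of $(0,0)$. Since $(\phi,u)\mapsto u-\phi$ is affine linear and $\cR_k : L^\infty_\alpha(\R^N) \to L^\infty(\R^N)$ is bounded linear by Proposition~\ref{resolvent-compact-and-continuous}(i), it will suffice to establish that the Nemytskii operator $N_f : L^\infty(\R^N) \to L^\infty_\alpha(\R^N)$ is $C^1$ near $0$, with candidate derivative at $u$ given by the real-linear map $[N_f'(u)h](x) := f'(x,u(x))[h(x)]$. Well-definedness and boundedness of $N_f'(u)$ follow directly from (\ref{eq:assumption-f1-diff}). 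Fréchet differentiability at $u$ will be obtained by estimating the remainder via the Taylor-type identity
\[
f(x,u(x)+h(x)) - f(x,u(x)) - f'(x,u(x))[h(x)] = \int_0^1 \bigl(f'(x,u(x)+th(x)) - f'(x,u(x))\bigr)[h(x)]\,dt,
\]
combined with the continuity of $f'$ and the uniform weighted bound (\ref{eq:assumption-f1-diff}); continuity of $u \mapsto N_f'(u)$ in the operator norm of $\cL_\R(L^\infty,L^\infty_\alpha)$ will be handled by an analogous argument.

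Once this $C^1$ regularity is in place, the vanishing hypothesis $f'(x,0)=0$ yields $N_f'(0)=0$, so that
\[
D_u F(0,0)[h] = h - \cR_k\bigl(N_f'(0)h\bigr) = h \qquad \text{for every } h \in L^\infty(\R^N),
\]
and $D_u F(0,0) = \mathrm{id}_{L^\infty(\R^N)}$ is trivially an isomorphism. The implicit function theorem then produces open neighborhoods $V,U \subset L^\infty(\R^N)$ of $0$ and a $C^1$ map $\phi \mapsto u_\phi$ from $V$ to $U$ with $F(\phi,u_\phi)=0$ and $u_\phi$ uniquely determined in $U$ for each $\phi \in V$, which is exactly the conclusion of the theorem.

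The hardest part will be the operator-norm continuity of $u \mapsto N_f'(u)$: the required uniformity in $x \in \R^N$ under the weight $\langle x\rangle^\alpha$ has to be extracted from the pointwise continuity of $f'$ in its second variable, the weighted decay $\|f'(x,w)\| \lesssim \langle x\rangle^{-\alpha}$ implicit in (\ref{eq:assumption-f1-diff}), and the vanishing $f'(x,0)=0$, typically via a truncation argument that treats $|x|\le R$ (compact, so uniform continuity) and $|x|>R$ (where the weighted bound forces decay) separately. Every remaining step is a routine application of the IFT once this regularity is verified.
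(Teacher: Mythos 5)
Your overall strategy is essentially the paper's: reduce to a local invertibility statement around $(0,0)$ via an inverse/implicit function theorem, with the work concentrated in showing the substitution operator $N_f$ is $C^1$ from $L^\infty(\R^N)$ into a suitable weighted space and $N_f'(0)=0$. The paper uses the inverse function theorem on $\cB(u)=u-\cR_kN_f(u)$; you use the implicit function theorem on $F(\phi,u)=\cB(u)-\phi$; these are trivially equivalent.

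There is, however, a technical gap in the target space you choose for $N_f$. You aim to prove that $N_f:L^\infty(\R^N)\to L^\infty_\alpha(\R^N)$ is $C^1$, and sketch a truncation argument separating $|x|\le R$ from $|x|>R$. The outer part of that truncation does \emph{not} close at the weight $\alpha$: for $|x|>R$ the only information available from (\ref{eq:assumption-f1-diff}) is the uniform bound
\[
\langle x\rangle^{\alpha}\,\bigl\|f'(x,u(x)+th(x))-f'(x,u(x))\bigr\|_{\cL_\R(\C,\C)}\le 2\,T_{f,M,\alpha},
\]
which neither tends to $0$ as $\|h\|_\infty\to 0$ nor becomes small as $R\to\infty$, and the hypothesis $f'(x,0)=0$ is of no use away from $u=0$. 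This is exactly why the paper's Lemma~\ref{lem:nemytskii_C-1} establishes $C^1$ differentiability of $N_f$ only into $L^\infty_{\alpha'}(\R^N)$ for $\alpha'<\alpha$: the gap $\alpha-\alpha'>0$ turns the above estimate into a factor $R^{\alpha'-\alpha}$ which \emph{does} go to $0$ as $R\to\infty$, and that is what makes both the Fr\'echet remainder bound and the operator-norm continuity of $u\mapsto N_f'(u)$ go through. Since $\alpha>\frac{N+1}{2}$, one can choose $\alpha'\in(\tfrac{N+1}{2},\alpha)$ so that $\cR_k:L^\infty_{\alpha'}(\R^N)\to L^\infty(\R^N)$ remains bounded by Proposition~\ref{resolvent-compact-and-continuous}, and then your argument is correct. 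In short: replace $L^\infty_\alpha$ by $L^\infty_{\alpha'}$ with $\frac{N+1}{2}<\alpha'<\alpha$ throughout; as stated, the differentiability claim at the weight $\alpha$ is false.
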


\qquad The proof of this theorem is very short and merely based on the inverse function theorem, see Section~\ref{sec:proofs-main-results} below. It applies in particular to power type nonlinearities 
\begin{equation}
  \label{eq:power-type}
f(x,u)=Q(x)|u|^{p-2}u.  
\end{equation}
More precisely, if $p>2$, and $Q \in L^\infty_{\alpha}(\R^N)$ for some $\alpha>\frac{N+1}{2}$, we find that $f(x,\cdot)$ is real differentiable for every $x \in \R^N$, and $f'= \partial_u f \in \cL_{\R}(\C,\C)$ is given by 
$f'(x,u)v = Q(x)\bigl( \frac{p}{2}|u|^{p-2}v + \frac{p-2}{2}|u|^{p-4} u^2 \bar v\bigr)$, which implies that $$
\|f'(x,u)\|_{\cL_{\R}(\C,\C)} \le (p-1)|Q(x)||u|^{p-2} \qquad \text{for $x \in \R^N$, $u \in \C$}. 
$$
From this it is easy to deduce that the assumptions of Theorem~\ref{teo-implicit-function} are satisfied in this case. In particular, for given $\phi \in L^\infty(\R^N)$, Theorem~\ref{teo-implicit-function} yields the existence of $\eps>0$ and a unique local branch $(-\eps,\eps) \to L^\infty(\R^N)$, $\lambda \mapsto u_\lambda$ of solutions of the equation
\begin{equation}\label{nlh-1-integral-parameter-lambda}
u = \cR_k(Q|u|^{p-2}u)+ \lambda \phi \qquad \text{in $L^\infty(\R^N)$.}
\end{equation} 
In our next result, we establish the existence of a global continuation of this local branch. 

\begin{theorem}\label{thm:rabinowitz-applied}
Let $N\geq 3$, $2<p<2^\ast$, $Q\in L^\infty_\alpha(\R^N,\R)\backslash\{0\}$ for some $\alpha > \frac{N+1}{2}$ and $\phi\in L^\infty(\R^N)$. Moreover, let 
$$
\cS_\phi:= \{(\lambda,u)\::\: \lambda \ge 0,\: u \in L^\infty(\R^N),\: \text{$u$ solves (\ref{nlh-1-integral-parameter-lambda})}\}\:\subset \: [0,\infty) \times L^\infty(\R^N),
$$
and let $\cC_\phi \subset \cS_\phi$ denote the connected component of $\cS_\phi$ which contains the point $(0,0)$. 

Then $\cC_\phi \setminus \{(0,0)\}$ is an unbounded subset of $(0,\infty) \times L^\infty(\R^N)$. 
\end{theorem}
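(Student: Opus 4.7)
The plan is to recast~\eqref{nlh-1-integral-parameter-lambda} as the fixed point equation $u=T(\lambda,u)$ with $T(\lambda,u):=\cR_k(Q|u|^{p-2}u)+\lambda\phi$, and to apply a global Rabinowitz-type continuation theorem after a suitable reparametrization that turns the initial parameter value into a uniquely solvable problem. First I would check that $T:[0,\infty)\times L^\infty(\R^N)\to L^\infty(\R^N)$ is completely continuous: using $|Q(x)|\leq\|Q\|_{L^\infty_\alpha}\langle x\rangle^{-\alpha}$, the Nemytskii operator $u\mapsto Q|u|^{p-2}u$ carries bounded subsets of $L^\infty$ continuously into bounded subsets of $L^\infty_\alpha$; since $\alpha>\frac{N(N+3)}{2(N+1)}>\frac{N+1}{2}$, Proposition~\ref{resolvent-compact-and-continuous}(i) yields compactness of $\cR_k:L^\infty_\alpha\to L^\infty$, and the map $\lambda\mapsto\lambda\phi$ is linear.

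A direct Rabinowitz argument is awkward because $(0,0)$ is not a bifurcation point from a trivial branch. To remedy this, I would reparametrize: for $\lambda>0$, set $\mu:=\lambda^{p-2}$ and $v:=u/\lambda$. Using the homogeneity $|\lambda v|^{p-2}(\lambda v)=\lambda^{p-1}|v|^{p-2}v$, the equation transforms into $v=S(\mu,v)$ where
\[
S(\mu,v):=\mu\,\cR_k(Q|v|^{p-2}v)+\phi.
\]
The crucial feature is that $S(0,\cdot)\equiv\phi$ is constant, so $v=\phi$ is the \emph{unique} fixed point of $S(0,\cdot)$ and has Leray-Schauder index $+1$. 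The rescaling $\Psi(\mu,v):=(\mu^{1/(p-2)},\mu^{1/(p-2)}v)$ is a homeomorphism of $(0,\infty)\times L^\infty$ extending continuously to $[0,\infty)\times L^\infty$ via $\Psi(0,v):=(0,0)$ (here the superlinearity $p>2$ is essential), sends fixed points of $S(\mu,\cdot)$ to fixed points of $T(\lambda,\cdot)$, and maps $(0,\phi)$ to $(0,0)$.

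Applying the Rabinowitz-style global alternative to the completely continuous map $S$ (proved via Whyburn's lemma together with the homotopy invariance of the Leray-Schauder degree on slice-varying open sets), the connected component $\cC'\subset[0,\infty)\times L^\infty$ of $\{(\mu,v):v=S(\mu,v)\}$ through $(0,\phi)$ is either unbounded in $[0,\infty)\times L^\infty$ or meets $\{0\}\times L^\infty$ at a second fixed point of $S(0,\cdot)$. The latter alternative is excluded by the uniqueness of $\phi$, so $\cC'$ must be unbounded. Since $\Psi$ is continuous and $\Psi(\cC')$ is a connected subset of the solution set of~\eqref{nlh-1-integral-parameter-lambda} containing $(0,0)$, maximality gives $\Psi(\cC')\subset\cC_\phi$.

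The main obstacle lies in converting unboundedness of $\cC'$ into unboundedness of $\cC_\phi$. If the $\mu$-projection of $\cC'$ is unbounded, then $\lambda=\mu^{1/(p-2)}$ is unbounded on $\Psi(\cC')\subset\cC_\phi$ and we are finished. The subtle case is when $\mu$ stays bounded on $\cC'$ while $\|v\|_\infty$ is unbounded; the integral identity $v=\mu\cR_k(Q|v|^{p-2}v)+\phi$ together with the estimate $\|\cR_k(Q|v|^{p-2}v)\|_\infty\leq\kappa_\alpha\|Q\|_{L^\infty_\alpha}\|v\|_\infty^{p-1}$ forces $\mu\|v\|_\infty^{p-2}$ to stay bounded below, and if it also stays bounded above then $\|u\|_\infty=\mu^{1/(p-2)}\|v\|_\infty$ remains bounded while $\lambda\to 0$, so $\cC_\phi$ accumulates at a point $(0,u_\ast)$ with $u_\ast\neq 0$ solving $u_\ast=\cR_k(Q|u_\ast|^{p-2}u_\ast)$. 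Closing this gap — either by invoking the paper's a priori estimates on $\cS_\phi$ (advertised in the introduction as the ``key step'') to rule out such accumulation, or by restarting a local branch at $(0,u_\ast)$ via the implicit function theorem (or a degree argument at degenerate solutions) and iterating until the branch truly escapes to infinity — is where the bulk of the work lies.
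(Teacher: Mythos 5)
Your reparametrization detour is unnecessary, and it opens a gap that you acknowledge but do not close. The variant of Rabinowitz's theorem used here (Theorem~\ref{thm:rabinowitz}) is a \emph{global continuation} theorem, not a global bifurcation theorem: it does not require $(0,0)$ to be a bifurcation point off a trivial line of solutions. Its hypotheses concern only the $\lambda=0$ slice, namely (a) that $u=G(0,u)$ has $u=0$ as its sole solution and (b) that $\deg(\mathrm{id}-G(0,\cdot),B_r(0),0)\neq 0$ for some $r>0$. The paper applies it directly to $G(\lambda,w)=\cR_k(Q|w|^{p-2}w)+\lambda\phi$. Hypothesis (a) is precisely the content of the Kato-type nonexistence result, Proposition~\ref{corol-kato}: any $w\in L^\infty$ with $w=\cR_k(Q|w|^{p-2}w)$ and $Q$ real-valued, $\alpha>\frac{N(N+3)}{2(N+1)}$, must vanish. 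Hypothesis (b) follows because $\partial_w G(0,0)=0$ (Lemma~\ref{lem:nemytskii_C-1}, using $p>2$), so on a small ball $\|G(0,w)\|\le\tfrac12\|w\|$ and the linear homotopy $tG(0,\cdot)$ gives $\deg=1$. This is the entire proof.

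The concrete gap in your argument is the case $\mu_n\to 0$, $\|v_n\|_\infty\to\infty$: you correctly note that then $\|u_n\|_\infty^{p-2}=\mu_n\|v_n\|_\infty^{p-2}$ stays bounded below, so a subsequence of $(\lambda_n,u_n)$ would accumulate (by compactness of $\cR_k$) at some $(0,u_*)$ with $u_*\neq 0$ solving $u_*=\cR_k(Q|u_*|^{p-2}u_*)$. You then propose to rule this out by a priori estimates or by restarting branches via the implicit function theorem, neither of which is available here (the a priori estimates in Proposition~\ref{prop:apriori_defocusing} require the extra sign and support conditions on $Q$, which Theorem~\ref{thm:rabinowitz-applied} does not assume, and the linearization at $u_*$ may be degenerate). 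The missing ingredient is exactly Proposition~\ref{corol-kato}: such a nonzero $u_*$ cannot exist, so this case is vacuous. It is striking that the very lemma that would close your gap is also what makes the reparametrization superfluous; had you recognized the role of the Kato-type nonexistence result, the direct application of Theorem~\ref{thm:rabinowitz} would have been immediate.
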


\qquad We note that in general the unboundedness of $\cC_\phi$ does not guarantee that $\cC_{\phi}$ intersects $\{1\} \times \R^N$, since the branch given by $\cC_\phi$ may blow up in $L^\infty(\R^N)$ at some value $\lambda \in (0,1)$. In particular, under the general assumptions of Theorem~\ref{thm:rabinowitz-applied}, we cannot guarantee the existence of solutions of the equation (\ref{nlh-1-integral}).  
For this, additional a priori bounds on the set of solutions are needed. We shall find such a priori bounds in the case where $Q \le 0$ in $\R^N$, which is usually refered to as the {\em defocusing case}. Moreover, we require $Q$ to have compact support with some control of its diameter. In the following, we let $L^\infty_c(\R^N)$ denote the set of functions $Q \in L^\infty(\R^N)$ with compact support $\supp Q \subset \R^N$, and we let $L^\infty_c(\R^N, \R)$ denotes the subspace of real-valued functions in $L^\infty_c(\R^N)$. We then have the following result. 

\begin{theorem}\label{thm:unbounded_branch-defocusing}
Let $N\geq 3$, $2<p<2^\ast$, $Q\in L^\infty_c(\R^N,\R)\backslash\{0\}$ and $\phi\in L^\infty(\R^N)$. Assume furthermore that $Q\leq 0$ a.e. in $\R^N$ and $\text{diam}(\text{supp }Q)\le \frac{\zN}{k}$, where  
$\zN$ denotes the first positive zero
of the Bessel function $Y_{\frac{N-2}2}$ of the second kind of order $\frac{N-2}{2}$. 

Then the set $\cC_\phi$ given in Theorem~\ref{thm:rabinowitz-applied} intersects $\{\lambda\} \times L^\infty(\R^N)$ for every $\lambda>0$. In particular, (\ref{nlh-1-integral-parameter-lambda}) admits a solution with $\lambda=1$.
\end{theorem}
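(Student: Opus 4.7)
By Theorem~\ref{thm:rabinowitz-applied}, $\cC_\phi$ is an unbounded connected subset of $[0,\infty)\times L^\infty(\R^N)$ containing $(0,0)$. I argue by contradiction: if there were some $\lambda_0>0$ with $\cC_\phi\cap(\{\lambda_0\}\times L^\infty(\R^N))=\emptyset$, then connectedness would force $\cC_\phi\subset[0,\lambda_0)\times L^\infty(\R^N)$, and unboundedness would produce a sequence $(\lambda_n,u_n)\in\cC_\phi$ with $\lambda_n\in[0,\lambda_0)$ and $\|u_n\|_{L^\infty}\to\infty$. The proof therefore reduces to the a priori estimate: for every $\Lambda>0$ there exists $C=C(\Lambda,\phi,Q,k)$ such that every $(\lambda,u)\in\cS_\phi$ with $\lambda\in[0,\Lambda]$ satisfies $\|u\|_{L^\infty(\R^N)}\le C$.

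\textbf{Reduction to a local estimate on $\supp Q$.} Because $Q$ is compactly supported, $Q|u|^{p-2}u\in L^\infty_\alpha(\R^N)$ for every $\alpha>\frac{N+1}{2}$, with norm bounded by $C(Q,\alpha)\,\|u\|_{L^\infty(\supp Q)}^{p-1}$. Substituting into (\ref{nlh-1-integral-parameter-lambda}) and invoking Proposition~\ref{resolvent-compact-and-continuous}(i) yields
\[
\|u\|_{L^\infty(\R^N)}\le\kappa_\alpha\,C(Q,\alpha)\,\|u\|_{L^\infty(\supp Q)}^{p-1}+\Lambda\|\phi\|_{L^\infty},
\]
so it suffices to bound $\|u\|_{L^\infty(\supp Q)}$ uniformly for $\lambda\in[0,\Lambda]$.

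\textbf{A priori bound on $\supp Q$ via a Bessel--$Y$ test function.} To control $u$ on $\supp Q$, I exploit both the defocusing sign $Q\le 0$ and the diameter hypothesis. Combining $\mathrm{diam}(\supp Q)\le \zN/k$ with Jung's theorem, one may select $x_0\in\R^N$ so that $\supp Q\subset\Omega:=\overline{B_{\zN/k}(x_0)}$, with $\supp Q$ at positive distance from $\partial\Omega$. Consider the radial function
\[
\psi_{x_0}(x):=-|x-x_0|^{(2-N)/2}\,Y_{\frac{N-2}{2}}(k|x-x_0|),
\]
which solves $(-\Delta-k^2)\psi_{x_0}=c_{N,k}\,\delta_{x_0}$ distributionally with $c_{N,k}>0$, is \emph{strictly positive} on $\Omega\setminus\{x_0\}$ (because $Y_{\frac{N-2}{2}}<0$ on $(0,\zN)$), and \emph{vanishes} on $\partial\Omega$. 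Testing $(-\Delta-k^2)u=Q|u|^{p-2}u$ against $\psi_{x_0}\bar u$, integrating over $\Omega\setminus B_\epsilon(x_0)$, applying Green's identity, letting $\epsilon\to 0$, and using $\psi_{x_0}|_{\partial\Omega}=0$, the real part of the resulting identity takes the coercive form
\[
\int_\Omega\psi_{x_0}|\nabla u|^2\,dx+\tfrac{c_{N,k}}{2}|u(x_0)|^2+\int_{\supp Q}\psi_{x_0}|Q|\,|u|^p\,dx=\tfrac{k^2}{2}\int_\Omega\psi_{x_0}|u|^2\,dx+\tfrac{1}{2}\int_{\partial\Omega}(-\partial_\nu\psi_{x_0})|u|^2\,dS,
\]
in which every term on the left is non-negative, and so is the boundary integral on the right since $\partial_\nu\psi_{x_0}\le 0$ on $\partial\Omega$. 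The $L^2$-integrals on the right are then controlled by substituting $u=\lambda\phi+\cR_k(Q|u|^{p-2}u)$ and using Proposition~\ref{resolvent-compact-and-continuous} together with Kenig--Ruiz--Sogge type $L^{p'}\to L^{p}$ estimates available in the range $2<p<2^\ast$, as well as the positive distance between $\supp Q$ and $\partial\Omega$, which gives clean pointwise bounds on $\cR_k(Q|u|^{p-2}u)$ at points of $\partial\Omega$. A careful bookkeeping yields a bound of the right-hand side by $C(\Lambda,\phi)\bigl(1+\|u\|_{L^p(\supp Q,\,|Q|\,dx)}^{q}\bigr)$ with some exponent $q<p$, whereas the coercive term on the left is bounded below by a constant multiple of $\|u\|_{L^p(\supp Q,\,|Q|\,dx)}^p$. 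A uniform bound on $\int_{\supp Q}|Q||u|^p\,dx$ follows, and a standard elliptic bootstrap on a slight enlargement of $\supp Q$, using $(-\Delta-k^2)u=Q|u|^{p-2}u$ locally, then converts this into the required uniform $L^\infty(\supp Q)$-bound, closing the contradiction.

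\textbf{Main obstacle.} The technical crux is the quantitative bookkeeping in the last step: one must verify that, after inserting the integral equation on the right-hand side of the coercive identity, every term grows like a power of $\|u\|_{L^p(\supp Q,\,|Q|\,dx)}$ \emph{strictly below $p$}, so that the coercive term on the left dominates. This rests on the smoothing effect of $\cR_k$, the restriction estimates available throughout $p<2^\ast$, and the separation between $\supp Q$ and $\partial\Omega$. The diameter condition $\mathrm{diam}(\supp Q)\le \zN/k$ is sharp at this point: it is precisely the threshold that keeps $\psi_{x_0}$ strictly positive on $\supp Q$ and vanishing on $\partial\Omega$, so that the coercive identity has the favorable sign; any larger support would allow $\psi_{x_0}$ to change sign on $\supp Q$ and destroy the absorption argument.
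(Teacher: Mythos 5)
The paper's proof of this theorem is one line: it invokes the a priori bound of Proposition~\ref{prop:apriori_defocusing}. Your proposal is, in effect, an attempt to reprove that proposition by a different multiplier argument, and that argument has a genuine gap at the decisive step.

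Your reduction of the theorem to a uniform a priori bound on $\supp Q$ via connectedness and unboundedness of $\cC_\phi$ is fine, and your test function $\psi_{x_0}$ is essentially $\Psi_k(\cdot-x_0)$, the real part of the fundamental solution. But the paper uses $\Psi_k$ in a different way: it multiplies the integral equation by the conjugate nonlinearity $Q|u|^{p-2}\bar u$ (not by $\psi_{x_0}\bar u$), splits $u=u_1+iu_2$, and invokes the quadratic-form positivity
$$
\int_{\R^N} f\,\bigl[(1_{B_\delta}\Psi_k)\ast f\bigr]\,dx\ \ge\ 0
$$
from Corollary~\ref{prop:bilinear_positive}, which rests on the nonnegativity of the Fourier transform of $1_{B_\delta}\Psi_k$ (Lemma~\ref{lem:FT_rad_positive}, a Sturm comparison argument for Bessel zeros). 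This choice of multiplier isolates $\int|Q||u|^p\,dx$ alone on the favorable side, leaving only the degree-$(p-1)$ interaction $\int|Q||u|^{p-2}\text{Re}(\bar u\varphi)\,dx$ on the other side. A single H\"older step then gives $\int|Q||u|^p\,dx\le \|\varphi\|_\infty^p\|Q\|_1$, and the bootstrap Lemma~\ref{lem:regularity1} upgrades this to an $L^\infty$ bound.

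Your Rellich-type identity is formally correct, but the asserted bookkeeping exponent $q<p$ is not achievable and the estimate does not close. The right-hand side of your identity contains $\tfrac{k^2}{2}\int_\Omega\psi_{x_0}|u|^2\,dx$ and a boundary integral, both \emph{quadratic} in $u$. Set $A:=\int|Q||u|^p\,dx$. After substituting $u=\lambda\phi+\cR_k(Q|u|^{p-2}u)$, any admissible resolvent estimate combined with H\"older gives at best $\|u\|_{L^2(\Omega)}\le C\bigl(\lambda\|\phi\|_\infty+A^{1/p'}\bigr)$, and a similar bound pointwise on $\partial\Omega$ (where the convolution is nonsingular). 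Hence the right-hand side of your identity is of order $A^{2/p'}=A^{2(p-1)/p}$, and since $2(p-1)/p>1$ for every $p>2$, the inequality you obtain, $A\le C\bigl(1+A^{2(p-1)/p}\bigr)$, yields no bound on $A$ at all. This failure is structural rather than a matter of sharper auxiliary estimates: the multiplier $\psi_{x_0}\bar u$ inevitably produces a competing term quadratic in $u$, i.e.\ of degree $2(p-1)$ after substituting the integral equation, while the paper's multiplier $Q|u|^{p-2}\bar u$ produces a competing term of degree $p-1<p$. Only the latter closes in the superquadratic range $p>2$.
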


 \qquad To put the assumption on the support of $Q$ into perspective, we note that $\zE = \frac{\pi}{2}$ since $Y_{\frac{1}{2}}(t)=- \sqrt{\frac{2}{\pi t}} \cos t$ for $t>0$. Moreover, $\zN > \zE$ for $N > 3$, see \cite[Section 9.5]{AS}. Consequently, the assumptions of Theorem~\ref{thm:unbounded_branch-defocusing} are satisfied if $Q\in L^\infty_c(\R^N,\R)\backslash\{0\}$ is a nonpositive function with $\text{diam}(\text{supp }Q)< \frac{\pi}{2k}$. We also refer to \cite[p. 467]{AS} for a list of the values of $\zN$ for $3 \le N \le 15$.

 \qquad It seems appropriate to compare our results with recent work on the existence of {\em real-valued (standing wave) solutions} of (\ref{nlh-1}). A large class of such real-valued solutions has been detected and studied extensively in recent years by considering the associated integral equation
 \begin{equation}
   \label{eq:real-valued-variational}
u = \Psi_k \ast(N_f(u)),   
 \end{equation}
 where $\Psi_k$ is the real part of the fundamental solution $\Phi_k$, see e.g. \cite{MMP,EW0,EW1,EY} and the references therein. In particular, a variational approach to detect and analyze solutions of (\ref{eq:real-valued-variational}) has been set up in \cite{EW1} for the special case where the nonlinearity $f$ is of the form $f(x,u)=Q(x)|u|^{p-2}u$ with nonnegative $Q \in L^\infty(\R^N,\R)$ and suitable exponents $p>2$. Variants of this variational approach have been developed further in \cite{MMP,EY} under appropriate assumptions on the nonlinearity. However, the variational methods in these papers are of no use in the context of the integral equation  (\ref{nlh-1-integral}) which has no variational structure. The contrast between real standing wave solutions and complex scattering solutions is even more glaring as we shall see that the related homogeneous equation $u = \cR_k [Q|u|^{p-2}u]$ admits only the trivial bounded solution $u\equiv 0$ if $p \ge 2$ and $Q \in L^\infty_\alpha(\R^N,\R)$ for some $\alpha> \frac{N+1}{2}$. Indeed, we shall prove this Liouville type result in Proposition \ref{corol-kato} below by adapting a nonexistence result due to Kato \cite{kato59} to the present nonlinear context.

\qquad In the perturbative setting where a priori smallness assumptions are imposed, the detection of real and complex solutions of (\ref{nlh-1-integral}) follows the same strategy of applying contraction mapping arguments in suitable function spaces. In this context, we mention the paper \cite{Mandel1} where a variant of the contraction mapping argument of Guti\'errez \cite{G} is developed and used to detect continua of small real-valued solutions of (\ref{nlh-1}) for a larger class of nonlinearities than in \cite{G}. More precisely, these continua are found by solving the non-homogeneous variant 
$u = \Psi_k \ast(Q|u|^{p-2}u) +\phi$ of (\ref{eq:real-valued-variational}) for a range of given small real-valued solutions $\phi$ of the homogeneous Helmholtz equation $-\Delta \phi -\phi =0$.

\qquad Due to the lack of a priori smallness assumptions and the lack of a variational structure, our main results given in Theorems 1.2, 1.5 and 1.6 require a different approach than in the above-mentioned papers. As mentioned earlier, this
approach is based on topological fixed point theory, and it therefore requires suitable a priori bounds. With regard to this aspect, the present paper is related to \cite{EW2} where continuous branches of real-valued standing wave solutions of (\ref{eq:real-valued-variational}) have been constructed. However, while the derivation of suitable priori bounds is the key step both in \cite{EW2} and in the present paper, these bounds are of different nature as they relate to different integral equations and to different classes of solutions. In \cite{EW2}, under suitable additional assumptions on $Q$ and $p$, a priori bounds are derived for real-valued solutions of $u = \Psi_k \ast(Q|u|^{p-2}u)$ which are positive within the support of the nonlinearity $f$. In contrast, here we need a priori bounds for complex solutions of (\ref{nlh-1-integral}), and for this we cannot use positivity properties and local maximum principles. Instead, the approach of the present paper is based on a Liouville theorem relying on Sommerfeld's radiation condition and on combining regularity and test function estimates with local monotonicity properties of the function $\Psi_k$, see Sections \ref{nonexistence} and~\ref{sec:priori-estim-defoc} below.

\qquad The paper is organized as follows.   In Section~\ref{sec:estim-helmh-resolv} we establish basic estimates of the resolvent operator $\cR_k$, and we prove Proposition~\ref{resolvent-compact-and-continuous}. In Section~\ref{sec:estim-subst-oper}, we show useful estimates and regularity properties of the substitution operator associated with the nonlinearity $f(x,u)$.   In order to apply topological fixed point theory,  we first need to prove the nonexistence of solutions to linear and superlinear integral equations related to the operator $\cR_k$. This will be done in Section~\ref{nonexistence}. In Section~\ref{sec:priori-estim-defoc}, we then prove a priori bounds for solution of equation \eqref{nlh-1-integral} and related variants under various assumptions on the nonlinearity $f$. The proof of the main theorems is then completed in Section~\ref{sec:proofs-main-results}. Finally, in the appendix, we provide a relative a priori bound based on bootstrap regularity estimates between $L^p$-spaces which is used in the proof of Theorem~\ref{thm:unbounded_branch-defocusing}.

\section{Estimates for the Helmholtz resolvent operator}
\label{sec:estim-helmh-resolv}

\begin{lemma}\label{lm 2.1}
Let $N\geq2$, $k>0$ and for $\alpha>\frac{N+1}{2}$, let $\tau(\alpha)$ be defined by (\ref{exp 1}). 
 Then for any $ v\in L^\infty_{\alpha } (\RR^N)$   and $\alpha>\frac{N+1}{2}$, we have
\[
\||\Phi_k|* v\|_{L^\infty_{\tau(\alpha)}}\leq C\| v\|_{L^\infty_{\alpha }},\quad \||\nabla \Phi_k|* v\|_{L^\infty_{\tau(\alpha)}}\leq C\| v\|_{L^\infty_{\alpha }},
\]
where the constant $C>0$ depends only on $N$, $\alpha$ and $k$.
\end{lemma}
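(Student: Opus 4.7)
My approach is to reduce the convolution bounds to pointwise estimates on $|\Phi_k|$ and $|\nabla\Phi_k|$ and then perform a careful domain-splitting argument that balances the slow far-field decay of the fundamental solution against the weighted decay of $v$.

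The first step is to record the two-regime bounds
\[
|\Phi_k(z)| \leq C\min\bigl(|z|^{2-N},\ |z|^{-(N-1)/2}\bigr),\qquad |\nabla\Phi_k(z)| \leq C\min\bigl(|z|^{1-N},\ |z|^{-(N-1)/2}\bigr),
\]
valid for $z\neq 0$ with $C=C(N,k)$ (for $N=2$ the near-field bound on $|\Phi_k|$ is replaced by a logarithm). These follow immediately from the standard asymptotic expansions of $H^{(1)}_{(N-2)/2}$ at zero and at infinity, see \cite{AS}. The crucial point is that the far-field decay of order $(N-1)/2$ alone is not globally integrable, but the weighted function $|z|^{-(N-1)/2}\langle z\rangle^{-\alpha}$ is integrable over $\RR^N$ precisely when $\alpha>\frac{N+1}{2}$.

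The main step is to show, for fixed $x\in\RR^N$ with $r=|x|\geq 2$, the pointwise estimate $\int_{\RR^N}|\Phi_k(x-y)|\langle y\rangle^{-\alpha}\,dy \leq C\langle x\rangle^{-\tau(\alpha)}$ (the case $r\leq 2$ being an immediate consequence of the integrability noted above). To this end I would split
\[
A_1=\{|y|\leq r/2\},\quad A_{2a}=\{|y|>r/2,\,|x-y|\leq r/2\},\quad A_{2b}=\{|y|>r/2,\,|x-y|>r/2\}.
\]
On $A_1$ the bound $|x-y|\geq r/2$ gives $|\Phi_k(x-y)|\leq Cr^{-(N-1)/2}$, and the remaining integral $\int_{|y|\leq r/2}\langle y\rangle^{-\alpha}\,dy$ contributes $O(1)$ when $\alpha>N$ and $O(r^{N-\alpha})$ when $\frac{N+1}{2}<\alpha<N$. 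On $A_{2a}$ the bound $\langle y\rangle^{-\alpha}\leq Cr^{-\alpha}$ factors out and the inner integral $\int_{|z|\leq r/2}|\Phi_k(z)|\,dz \leq Cr^{(N+1)/2}$ is dominated by the far-field tail for $r$ large. On $A_{2b}$ both decays are available and, after rescaling $y=rz$, the integral reduces to a dimensionless one that is finite precisely because $\alpha>\frac{N+1}{2}$. Summing the three contributions yields an integral of order $r^{-(N-1)/2}$ when $\alpha>N$ and of order $r^{(N+1)/2-\alpha}$ when $\frac{N+1}{2}<\alpha<N$, which after multiplication by $\langle x\rangle^{\tau(\alpha)}$ produces the claimed uniform bound. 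The gradient estimate follows by the same argument, since the near-field singularity $|z|^{1-N}$ is still locally integrable for $N\geq 2$ and the far-field decay rate is identical.

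\textbf{Main obstacle.} The delicate point is the bookkeeping that produces exactly the two regimes of $\tau(\alpha)$: for large $\alpha$ the leading contribution comes from $A_1$ and decays like $r^{-(N-1)/2}$, whereas for $\alpha$ close to $\frac{N+1}{2}$ the dominant contributions are from $A_{2a}$ and $A_{2b}$, both decaying only like $r^{(N+1)/2-\alpha}$. The condition $\alpha>\frac{N+1}{2}$ is sharp for the $A_{2b}$ estimate. A secondary technical point is the borderline case $\alpha=N$, where $A_1$ produces a logarithmic factor; this can be absorbed by a slight decrease of $\tau$ or handled by interpolation with a nearby value of $\alpha$.
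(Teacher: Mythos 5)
Your proof is correct and follows essentially the same decomposition as the paper, which splits in the convolution variable $z = x-y$ into a near-origin ball (handling the singularity of $\Phi_k$), the remainder of $B_{|x|/2}(0)$, the ball $B_{|x|/2}(x)$, and the far complement, matching your $A_{2a}$, $A_1$, and $A_{2b}$ respectively, with the same threshold $\alpha > \frac{N+1}{2}$ and the same two regimes for $\tau(\alpha)$. Your caution about the borderline case $\alpha = N$ is well placed: the paper's bound $I_3 \leq C|x|^{-\tau(\alpha)}$ silently absorbs a logarithmic factor coming from $\int_{|w| \leq |x|/2} \langle w\rangle^{-N}\,dw \approx \log|x|$, so the stated estimate in fact fails at $\alpha = N$ and requires either excluding that value or a slight downward adjustment of $\tau$, exactly as you observed.
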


\begin{proof}
In the following, the letter $C>0$ always denotes constants which only depends on $N$, $\alpha$ and $k$. We observe that 
\[
|\Phi_k(x)|\leq
\begin{cases}
C\, |x|^{2-N}\quad &\text{if }\ N\geq 3,\\[1.5mm]
C\, \log \frac2{|x|}   &\text{if }\ N=2,\,
\end{cases}\qquad |\nabla\Phi_k|\leq c|x|^{1-N}
\quad\,\text{for $0<|x|\leq 1$} \]
and
\[ |\Phi_k(x)|,\, |\nabla\Phi_k|\leq C\, |x|^{\frac{1-N}2} \quad \text{if } |x|>1. \]
It then follows that
\begin{align*}
  &|(|\Phi_k|* v)(x)|\leq \int_{\RR^N} |\Phi_k(z)|\,| v(x-z)|\, dz\notag\\
            &\leq \begin{cases}
            C\| v\|_{L^\infty_\alpha}\,\Big(\int_{B_1(0)}|z|^{2-N}\langle
              x-z\rangle^{-\alpha}\, dz + \int_{\RR^N\backslash B_1(0)}|z|^{\frac{1-N}2}\, \langle x-z\rangle^{-\alpha}\, dz\Big)\ \ \text{if }\ N\geq3 ,\\[3mm]
           C\| v\|_{L^\infty_\alpha}\,\Big(\int_{B_1(0)}\log\frac2{|z|} \langle
              x-z\rangle^{-\alpha}\, dz + \int_{\RR^N\backslash B_1(0)}|z|^{\frac{1-N}2}\, \langle x-z\rangle^{-\alpha}\, dz\Big)\ \ \text{if }\ N=2.
              \end{cases}
\end{align*}

 For $|x|\leq 4$, it is easy to see that
\begin{align}\label{2.2}
  |(|\Phi_k|* v)(x)|\leq \begin{cases}
            C\| v\|_{L^\infty_\alpha}\,\Big(\int_{B_1(0)}|z|^{2-N} \,dz + \int_{\RR^N\backslash B_1(0)}|z|^{\frac{1-N}2-\alpha}\,  dz\Big)\ \ \text{if }N\geq3 \\[3mm]
           C\| v\|_{L^\infty_\alpha}\,\Big(\int_{B_1(0)}\log\frac2{|z|}  \, dz + \int_{\RR^N\backslash B_1(0)}|z|^{\frac{1-N}2-\alpha}\, dz\Big)\ \ \text{if }N=2,
\end{cases}  \end{align}
and 
\begin{align}\label{2.2-gradient}
  |(|\nabla\Phi_k|* v)(x)|\leq   C\| v\|_{L^\infty_\alpha}\,\Big(\int_{B_1(0)}|z|^{1-N} \,dz + \int_{\RR^N\backslash B_1(0)}|z|^{\frac{1-N}2-\alpha}\,  dz\Big),
   \end{align}
where $\frac{1-N}2-\alpha<-N$.

\qquad In the following, we consider $|x|>4$.  Since $\alpha>\frac{N+1}{2}$, direct computation shows that 
\begin{align*}
I_1&:=\begin{cases} \int_{B_1(0)}|z|^{2-N}\, \langle
              x-z\rangle^{-\alpha}\, dz\quad{\rm if}\ N\geq 3 \\[1mm]
  \int_{B_1(0)}\log\frac{2}{|z|}\, \langle
              x-z\rangle^{-\alpha}\, dz\quad{\rm if}\ N=2
              \end{cases}
               \\[1.5mm]
             & \displaystyle\,\leq C |x|^{-\alpha} \le C \langle x\rangle^{-\alpha}.
 \end{align*}
Moreover, 
   \begin{align*}  
I_2&:=\int_{B_{\frac{|x|}2} (0)\setminus B_1(0)} |z|^{\frac{1-N}2}\, \langle
     x-z\rangle^{-\alpha}\, dz  \leq C|x|^{-\alpha} \int_{B_{\frac{|x|}2} (0)\setminus B_1(0)} |z|^{\frac{1-N}2} dz
    \leq C|x|^{-\alpha+\frac{N+1}{2}},\\
I_3&:=\int_{B_{\frac{|x|}2} (x)} |z|^{\frac{1-N}2}\, \langle
     x-z\rangle^{-\alpha}\, dz  \leq C|x|^{-\frac{N-1}{2}} \int_{B_{\frac{|x|}2} (x) } \langle
     x-z\rangle^{-\alpha} dz
    \leq C|x|^{-\tau(\alpha)}
 \end{align*}
 and
 \begin{align*}  I_4:&=\int_{\RR^N\setminus(B_{\frac{|x|}2} (0)\cup B_{\frac{|x|}2} (x)) } |z|^{\frac{1-N}2}\, \langle
     x-z\rangle^{-\alpha}\, dz \\
    &= |x|^{-\alpha+\frac{N+1}{2}} \int_{\RR^N\setminus(B_{\frac{1}2} (0)\cup B_{\frac{1}2} (e_x)) }  |z|^{\frac{N-1}{2}}|z-\hat x|^{-\alpha} dz
    \leq C|x|^{-\alpha+\frac{N+1}{2}},
 \end{align*}
 where $\hat x=\frac{x}{|x|}$. Since $-\tau(\alpha)\geq \max\{-\frac{N-1}{2}, -\alpha, -\alpha+\frac{N+1}{2}\}$, we may combine these estimates with (\ref{2.2}) to see that
\[|(|\Phi_k|*v)(x)|\leq C\| v\|_{L^\infty_{ \alpha}}\,\bigg(\sum^{4}_{j=1}I_j\bigg) \leq C \langle x \rangle^{-\tau(\alpha)} \| v\|_{L^\infty_{ \alpha}} \quad  \text{for all $x\in  \RR^N.$}\]
Moreover, noting that 
\begin{align*}
\tilde I_1:=  \int_{B_1(0)}|z|^{1-N}\, \langle
              x-z\rangle^{-\alpha}\, dz 
                 \leq C |x|^{-\alpha} \leq C \langle x\rangle^{-\alpha}\qquad \text{for $|x|>4$,}
 \end{align*}
we find by (\ref{2.2-gradient}) that  
 \[|(|\nabla \Phi_k|*v)(x)|\leq C\| v\|_{L^\infty_{ \alpha}}\,\bigg(\tilde I_1+\sum^{4}_{j=2}I_j\bigg) \leq C \langle x \rangle^{-\tau(\alpha)}\| v\|_{L^\infty_{ \alpha}}\quad  \text{for all $x\in  \RR^N.$}\]
 The proof is thus complete.
\end{proof}

\begin{proof}[Proof of Proposition~\ref{resolvent-compact-and-continuous}]
(i) Clearly, Lemma~\ref{lm 2.1} yields (\ref{eq:kappa-sigma-finite}) and therefore the continuity of the linear resolvent operator $\cR_k: L^\infty_\alpha(\RR^N) \to L^\infty_{\tau(\alpha)}(\RR^N)$, whereas the latter space is continuously embedded in $L^\infty(\RR^N)$. To see the compactness of $\cR_k$ as a map $L^\infty_\alpha(\RR^N) \to L^\infty(\RR^N)$, let $(u_n)_n$ be a sequence in 
$L^\infty_\alpha(\RR^N)$ with 
$$
m:= \sup_{n \in \N} \|u_n \|_{L^\infty_{\alpha}}< \infty.
$$

Moreover, let $v_n:= \cR_k u_n = \Phi * u_n$ for $n \in \N$. By Lemma~\ref{lm 2.1}, we then have 
\begin{equation}
  \label{eq:proof-est-compactness}
\|v_n\|_{L^\infty_{\tau(\alpha)}} \le C m \quad \text{and}\qquad \|\nabla v_n\|_{L^\infty_{\tau(\alpha)}}= 
\|\nabla \Phi * u_n\|_{L^\infty_{\tau(\alpha)}} \le C m
\end{equation}
for all $n \in \N$. In particular, the sequence $(v_n)_n$ is bounded in $C^1_{loc}(\R^N)$. By the Arzel\`a-Ascoli theorem, there exists $v \in L^\infty_{loc}(\R^N)$ with 
\begin{equation}
  \label{eq:proof-locally-uniformly}
\text{$v_n \mapsto v$ locally uniformly on $\R^N$.}  
\end{equation}
By (\ref{eq:proof-est-compactness}), it then follows that $v \in L^\infty_{\tau(\alpha)}(\R^N)$ with $\|v\|_{L^\infty_{\tau(\alpha)}} \le C m$. 

\qquad Moreover, for given $R>0$ we have, with $A_R:= \R^N \setminus B_R(0)$ 
$$
\|v_n-v\|_{L^\infty(A_R)}\le \|v_n\|_{L^\infty(A_R)} + \|v\|_{L^\infty(A_R)} \le R^{-\tau(\alpha)}\Bigl(\|v_n\|_{L^\infty_{\tau(\alpha)}} + \|v\|_{L^\infty_{\tau(\alpha)}}\Bigr) \le 2Cm R^{-\tau(\alpha)}.
$$
Combining this estimate with (\ref{eq:proof-locally-uniformly}), we see that $\limsup \limits_{n \to \infty}\|v_n-v\|_{L^\infty(\R^N)}\le 
2Cm R^{-\tau(\alpha)}$ for every $R>0$. Since $\tau(\alpha)>0$, we conclude that $v_n \to v$ in $L^\infty(\R^N)$. This shows the compactness of the operator $L^\infty_\alpha(\RR^N) \to L^\infty(\RR^N)$.\\
(ii) Let $\alpha > \frac{N(N+3)}{2(N+1)}$ and $h \in L^\infty_\alpha(\R^N)$. It then follows that $h \in L^{\frac{2(N+1)}{N+3}}(\R^N)$. Consequently, \cite[Proposition A.1]{EW1} implies that $u= \cR_k h$ is a strong solution of $-\Delta u - k^2 u = h$. Moreover, $u$ satisfies~(\ref{eqn:sommerfeld1-averaged}) by the estimate in \cite[Theorem 8]{G} and the remark following it.

Finally, we suppose that $\alpha > N$. In this case, the linear map
$$
\widetilde\cR_k: L^\infty_\alpha(\R^N) \to L^\infty_{\text{\tiny $\frac{N-1}{2}$}}(\R^N), \qquad v \mapsto \widetilde\cR_k(v):=\frac{d \cR_k v}{dr} -ik \cR_k v
$$
is well-defined and bounded by Lemma~\ref{lm 2.1}. Moreover, if $h \in L^\infty(\R^N)$ has compact support, the fact that $\Phi_k$ satisfies~(\ref{sommerfeld-1}) and elementary convolution estimates show that $u= \cR_k h$ also satisfies~(\ref{sommerfeld-1}). In the general case $h \in L^\infty_\alpha(\R^N)$, we consider a sequence of functions $h_n \in L^\infty_\alpha(\R^N)$ with compact support and such that $h_n \to h$ in $L^\infty_\alpha(\R^N)$, which then also implies that
\begin{equation}
  \label{eq:extra-argument-strong-s-c}
\widetilde\cR_k h_n \to \widetilde\cR_k h \qquad \text{in $\: L^\infty_{\text{\tiny $\frac{N-1}{2}$}}(\R^N).$}  
\end{equation}
Moreover, for every $n \in \N$ we have 
\begin{align*}
\limsup_{|x| \to \infty} |x|^{\frac{N-1}{2}}\bigl|[\widetilde\cR_k h](x)\bigr| &\le
                                                                              \limsup_{|x| \to \infty} |x|^{\frac{N-1}{2}}\bigl|[\widetilde\cR_k h_n](x)\bigr| + \|\widetilde\cR_k h-\widetilde\cR_k h_n\|_{L^\infty_{\text{\tiny $\frac{N-1}{2}$}}}\\
&= \|\widetilde\cR_k h-\widetilde\cR_k h_n\|_{L^\infty_{\text{\tiny $\frac{N-1}{2}$}}},    
\end{align*}
and thus
$$
\limsup_{|x| \to \infty} |x|^{\frac{N-1}{2}}\bigl|[\widetilde\cR_k h](x)\bigr| \le \lim_{n \to \infty}\|\widetilde\cR_k h-\widetilde\cR_k h_n\|_{L^\infty_{\text{\tiny $\frac{N-1}{2}$}}} = 0
$$
by (\ref{eq:extra-argument-strong-s-c}). Hence $u= \cR_k h$ satisfies~(\ref{sommerfeld-1}).
\end{proof}

\section{Estimates for the substitution operator}
\label{sec:estim-subst-oper}

\begin{lemma}
\label{lem:nemytskii_cont} 
Let, for some $\alpha \in \R$, the nonlinearity $f: \R^N \times \C \to \C$ be a continuous function satisfying 
\begin{equation}
  \label{eq:assumption-f1-1}
S_{f,M,\alpha}:=  \sup_{|u|\le M,x \in \R^N} \langle x \rangle^{\alpha}|f(x,u)|< \infty \qquad \text{for all $M>0$.}   
\end{equation}
Then the superposition operator 
$$
N_f: L^\infty (\R^N) \to L^{\infty}_{\alpha'}(\R^N),\qquad N_f(u)(x):= f(x,u(x))
$$
is well defined, bounded and continuous for every $\alpha'<\alpha$.
\end{lemma}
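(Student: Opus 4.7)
The plan is to verify the mapping property and boundedness directly from the growth bound \eqref{eq:assumption-f1-1}, and then to establish continuity by a classical split-a-ball argument. The strict inequality $\alpha' < \alpha$ will play its role only in the continuity step, where it is needed to discard the tail $\R^N \setminus B_R(0)$.

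\textbf{Boundedness.} For any $u \in L^\infty(\R^N)$ with $\|u\|_{L^\infty} \le M$, the definition of $S_{f,M,\alpha}$ gives $\langle x \rangle^{\alpha}|f(x,u(x))| \le S_{f,M,\alpha}$ for every $x \in \R^N$. Since $\alpha' \le \alpha$, the weight $\langle x \rangle^{\alpha'-\alpha}$ is bounded by $1$, so $\|N_f(u)\|_{L^\infty_{\alpha'}} \le S_{f,M,\alpha}$. Thus $N_f(u) \in L^\infty_{\alpha'}(\R^N)$ and $N_f$ sends bounded sets of $L^\infty(\R^N)$ into bounded sets of $L^\infty_{\alpha'}(\R^N)$.

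\textbf{Continuity.} Given a sequence $u_n \to u$ in $L^\infty(\R^N)$, set $M := 1+\sup_n \|u_n\|_{L^\infty} < \infty$, and fix $\varepsilon > 0$. Split $\R^N$ into $B_R(0)$ and $\R^N \setminus B_R(0)$. On the complement, the uniform bound
\begin{equation*}
\langle x \rangle^{\alpha'}\bigl|f(x,u_n(x)) - f(x,u(x))\bigr| \le 2 S_{f,M,\alpha}\,\langle x \rangle^{\alpha'-\alpha} \le 2 S_{f,M,\alpha}\,(1+R^2)^{(\alpha'-\alpha)/2}
\end{equation*}
holds for every $n$ and every $|x| > R$, and, using $\alpha'-\alpha < 0$, can be made smaller than $\varepsilon/2$ by choosing $R$ large. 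On $\overline{B_R(0)}$, the function $(x,v) \mapsto f(x,v)$ is uniformly continuous on the compact set $\overline{B_R(0)} \times \{v \in \C : |v| \le M\}$. Combined with $u_n \to u$ uniformly, this yields $\sup_{x \in B_R(0)} |f(x,u_n(x)) - f(x,u(x))| \to 0$, and since $\langle x \rangle^{\alpha'}$ is bounded on $B_R(0)$ (by $(1+R^2)^{\alpha'/2}$ if $\alpha' \ge 0$, by $1$ if $\alpha' < 0$), the weighted supremum on the ball also tends to zero. Hence it eventually drops below $\varepsilon/2$, completing the argument.

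\textbf{Main obstacle.} The result is a weighted variant of the standard continuity theorem for Nemytskii operators and presents no essential obstacle. The one point worth emphasizing is the role of the strict inequality $\alpha' < \alpha$: it is precisely the decay of $\langle x \rangle^{\alpha'-\alpha}$ at infinity that makes the tail of the supremum small independently of $n$, so that local uniform convergence on $B_R(0)$ (where the weight is harmless) can be combined with an arbitrarily small tail to deliver global convergence in $L^\infty_{\alpha'}(\R^N)$.
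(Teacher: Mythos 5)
Your proof is correct and follows essentially the same route as the paper: well-definedness and boundedness follow directly from the growth bound, and continuity is obtained by splitting $\R^N$ into a large ball (where uniform continuity of $f$ on a compact set is used) and its complement (where the factor $\langle x\rangle^{\alpha'-\alpha}$ decays). The only cosmetic difference is that you phrase the continuity step as a fix-$\varepsilon$-then-choose-$R$-then-$n$ argument, while the paper bounds $\limsup_{n\to\infty}\|N_f(u_n)-N_f(u)\|_{L^\infty_{\alpha'}}$ by a quantity that vanishes as $R\to\infty$; the content is identical.
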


\begin{proof}
It clearly follows from (\ref{eq:assumption-f1-1}) that $N_f$ is well defined and satisfies the estimate 
$$
\|N_f(u)\|_{L^\infty_{\alpha'}}\le \|N_f(u)\|_{L^\infty_\alpha} \le S_{f,M,\alpha} \qquad \text{for $M>0$ and $u \in L^\infty(\R^N)$ with $\|u\|_{L^\infty} \le M$.}
$$
To see the continuity we consider a sequence $(u_n)_n \subset L^\infty(\R^N)$ with $u_n \to u$ in $L^\infty(\R^N)$, and we put 
$M:= \sup \{\|u_n\|_{L^\infty}\::\: n \in \N\}$. For given $R>0$ we have, with $B_R:= B_R(0)$ and 
$A_R:= \R^N \setminus B_R$, 
\begin{align*}
\|N_f(u_n)-N_f(u)\|_{L^\infty_{\alpha'}(A_R)} & \le \|N_f(u_n)\|_{L^\infty_{\alpha'}(A_R)} +\|N_f(u)\|_{L^\infty_{\alpha'}(A_R)}\\
&\le R^{\alpha'-\alpha}\Bigl(
\|N_f(u_n)\|_{L^\infty_{\alpha}(A_R)} +\|N_f(u)\|_{L^\infty_{\alpha}(A_R)}\Bigr)\\ 
&\le  2 S_{f,M,\alpha} R^{\alpha'-\alpha}.
\end{align*}
Moreover, since $f$ is uniformly continuous on $D_R:= \{(x,z) \in \R^N \times \C\::\: \|x\| \le R,\:|z| \le M\}$, we find that 
$$
\|N_f(u_n)-N_f(u)\|_{L^\infty(B_R)}= \sup_{|x| \le R}|f(x,u_n(x))-f(x,u(x))| \to 0 \qquad \text{as $n \to \infty$.}
$$

  We thus infer that $\limsup \limits_{n \to \infty}\|N_f(u_n)-N_f(u)\|_{L^\infty_{\alpha'}(\R^N)}\le 2 S_{f,M,\alpha} R^{\alpha'-\alpha}$ for every $R>0$. Since $\alpha' < \alpha$ by assumption, we conclude that $N_f(u_n) \to N_f(u)$ in $L^\infty_{\alpha'}(\R^N)$. This shows the continuity of $N_f: L^\infty(\RR^N) \to L^\infty_{\alpha'}(\R^N)$.  
\end{proof}

\begin{lemma}\label{lem:nemytskii_C-1} 
Let, for some $\alpha>\frac{N+1}{2}$, the nonlinearity $f: \R^N \times \C \to \C$ be a continuous function satisfying (\ref{eq:assumption-f1-1}). Suppose moreover that the function $f(x,\cdot):\C \to \C$ is real differentiable for every $x \in \R^N$, and that $f':= \partial_u f: \R^N \times \C \to \cL_{\R}(\C,\C)$ is a continuous function satisfying 
\begin{equation}
  \label{eq:assumption-f1-1-C-1}
T_{f,M,\alpha}:=  \sup_{|u|\le M,x \in \R^N} \langle x \rangle^{\alpha}\|f'(x,u)\|_{\cL_\R(\C,\C)}< \infty \qquad \text{for all $M>0$.}   
\end{equation}
Then the superposition operator $N_f: L^\infty (\R^N) \to L^{\infty}_{\alpha'}(\R^N)$ is of class $C^1$ for $\alpha' < \alpha$ with 
\begin{equation}
  \label{eq:expression-derivative}
N_f'(u) := N_{f'}(u) \qquad  \text{for $u \in L^\infty(\R^N)$,}
\end{equation}
where $N_{f'}(u) \in \cL_\R(L^\infty(\R^N),L^\infty_{\alpha'}(\R^N))$ is defined by 
\begin{equation}
  \label{eq:definition-substitution-deriv}
[N_f'(u)v](x):= f'(x,u(x))v(x) \qquad \text{for $v \in L^\infty(\R^N), x \in \R^N$.}
\end{equation}
\end{lemma}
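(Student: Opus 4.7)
The plan is to follow the same split-in-space scheme used in Lemma~\ref{lem:nemytskii_cont}, combined with the fundamental theorem of calculus for real-differentiable maps to reduce the remainder of the linearization to an expression involving only $f'$.

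First, I would check that $N_{f'}(u)$ as defined by (\ref{eq:definition-substitution-deriv}) is indeed in $\cL_\R(L^\infty(\R^N),L^\infty_{\alpha'}(\R^N))$. For $\|u\|_{L^\infty}\le M$, (\ref{eq:assumption-f1-1-C-1}) gives pointwise $\langle x\rangle^{\alpha}\|f'(x,u(x))\|_{\cL_\R(\C,\C)}\le T_{f,M,\alpha}$, so for any $v\in L^\infty(\R^N)$,
\[
\|N_{f'}(u)v\|_{L^\infty_{\alpha'}}\le \|N_{f'}(u)v\|_{L^\infty_{\alpha}}\le T_{f,M,\alpha}\,\|v\|_{L^\infty},
\]
since $\alpha'<\alpha$ implies $\langle x\rangle^{\alpha'}\le \langle x\rangle^{\alpha}$. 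Linearity in $v$ over $\R$ is immediate.

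Next, I would prove Fréchet differentiability at a fixed $u\in L^\infty(\R^N)$. For $v\in L^\infty(\R^N)$ with $\|v\|_{L^\infty}\le 1$, set $M:=\|u\|_{L^\infty}+1$. Applying the fundamental theorem of calculus to $t\mapsto f(x,u(x)+tv(x))$ (which is $C^1$ on $[0,1]$ because $f(x,\cdot)$ is real-differentiable and $f'$ is continuous) yields
\[
r_v(x):=f(x,u(x)+v(x))-f(x,u(x))-f'(x,u(x))v(x)=\int_0^1\!\bigl[f'(x,u(x)+tv(x))-f'(x,u(x))\bigr]v(x)\,dt.
\]
Hence $\langle x\rangle^{\alpha'}|r_v(x)|\le \|v\|_{L^\infty}\,\omega_R(v)$ with
\[
\omega_R(v):=\sup_{t\in[0,1],\,x\in\R^N}\langle x\rangle^{\alpha'}\|f'(x,u(x)+tv(x))-f'(x,u(x))\|_{\cL_\R(\C,\C)}.
\]
Now I would run the split $\R^N=B_R\cup A_R$ exactly as in Lemma~\ref{lem:nemytskii_cont}. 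On $A_R=\R^N\setminus B_R$, (\ref{eq:assumption-f1-1-C-1}) bounds each term in the difference by $T_{f,M,\alpha}\langle x\rangle^{-\alpha}$, giving a contribution to $\omega_R(v)$ of at most $2T_{f,M,\alpha}R^{\alpha'-\alpha}$, which can be made smaller than any prescribed $\eps/2$ by choosing $R$ large (here $\alpha'<\alpha$ is essential). With $R$ fixed, uniform continuity of $f'$ on the compact set $\overline{B_R}\times\{|z|\le M\}$ produces $\delta>0$ so that $\|v\|_{L^\infty}<\delta$ forces the contribution on $B_R$ to be below $\eps/2$ as well. Thus $\omega_R(v)\to 0$ as $\|v\|_{L^\infty}\to 0$, and $\|r_v\|_{L^\infty_{\alpha'}}=o(\|v\|_{L^\infty})$, which proves Fréchet differentiability with $N_f'(u)=N_{f'}(u)$.

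Finally, I would show that $u\mapsto N_{f'}(u)$ is continuous from $L^\infty(\R^N)$ into $\cL_\R(L^\infty(\R^N),L^\infty_{\alpha'}(\R^N))$. For $u_n\to u$ in $L^\infty(\R^N)$,
\[
\|N_{f'}(u_n)-N_{f'}(u)\|_{\cL_\R(L^\infty,L^\infty_{\alpha'})}\le \sup_{x\in\R^N}\langle x\rangle^{\alpha'}\|f'(x,u_n(x))-f'(x,u(x))\|_{\cL_\R(\C,\C)},
\]
and the right-hand side tends to zero by the verbatim argument of Lemma~\ref{lem:nemytskii_cont}, applied to the continuous $\cL_\R(\C,\C)$-valued nonlinearity $f'$ in place of $f$ (the proof there uses only continuity and the weighted pointwise bound, both of which hold by (\ref{eq:assumption-f1-1-C-1}) and the assumed continuity of $f'$). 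Combining these three steps yields the $C^1$ regularity of $N_f$ with derivative (\ref{eq:expression-derivative}). The only delicate point is the use of $\alpha'<\alpha$ to absorb the weighted tails outside a large ball; once this is exploited, everything else is a direct transcription of the continuity argument already carried out for $N_f$.
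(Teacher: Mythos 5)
Your proof is correct and follows essentially the same route as the paper: the same fundamental-theorem-of-calculus representation of the remainder, the same split of $\R^N$ into $B_R$ and $A_R=\R^N\setminus B_R$ exploiting $\alpha'<\alpha$ for the tail, and the same use of uniform continuity of $f'$ on compact sets to control the inner region. The only cosmetic difference is that you establish Fr\'echet differentiability directly, whereas the paper proves G\^ateaux differentiability (parameterized by $\theta\to 0$) and invokes the continuity of $u\mapsto N_{f'}(u)$ to upgrade to $C^1$; the underlying estimates are identical.
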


\begin{proof}
For the sake of brevity, we put $X:= L^\infty(\R^N)$ and $Y:= L^{\infty}_{\alpha'}(\R^N)$. By assumption (\ref{eq:assumption-f1-1-C-1}) and a very similar argument as in the proof of Lemma~\ref{lem:nemytskii_cont}, the nonlinear operator 
$$
N_{f'}: X \to \cL_\R(X,Y)
$$
defined by (\ref{eq:definition-substitution-deriv}) is well-defined, bounded and continuous. Thus, it suffices to show that $N_f$ is G\^ateaux-differentiable, and that (\ref{eq:expression-derivative}) is valid as a directional derivative. So let $u,v \in X$, and let $M:= \|u\|_{L^\infty}+ \|v\|_{L^\infty}$. For $\theta \in \R$ and $x \in \R^N$, we estimate 
\begin{align*}
\Bigl|&\frac{N_f(u+\theta v)(x)-N_f(u)(x)}{\theta}-[N_{f'}(u)v](x)\Bigr|\\
&= \Bigl|\frac{f(x,[u+\theta v](x))-f(x,u(x))}{\theta}-f'(x,u(x))v(x)\Bigr|\\
&= \Bigl|\int_0^1 \bigl[f'(x,[u+\xi \theta v](x))-f'(x,u(x))\bigr] v(x)\, d\xi \Bigr| \le |v(x)| g_\theta(x)
\end{align*}
with 
$$
g_\theta(x):= \sup_{\xi\in[0,1]}\bigl\|f'(x,[u+\xi \theta v](x))-f'(x,u(x))\bigr\|_{\cL_{\R}(\C,\C)} \qquad \text{for $\theta \in \R$, $x \in \R^N$.}
$$

Since $\|u + \tau v\|_{L^\infty} \le M$ for $\tau \in \R$, $|\tau| \le 1$, we have
$$
|g_\theta(x)|\le \sup_{\tau \in[0,1]}\|f'(x,[u+\tau v](x))\|_{\cL_{\R}(\C,\C)} 
+ \|f'(x,u(x))\|_{\cL_{\R}(\C,\C)} \le 2T_{f,M,\alpha}\langle x\rangle^{-\alpha} 
$$
for $|\theta| \le 1$, $x \in \R^N$. Similarly as in the proof of Lemma~\ref{lem:nemytskii_cont}, we now define, for given $R>0$, $B_R:= B_R(0)$, $A_R:= \R^N \setminus B_R$, and $D_R:= \{(x,z) \in \R^N \times \C\::\: \|x\| \le R,\:|z| \le M\}$. From the estimate above, it then follows 
\begin{equation}
\label{diff-substitution-est-1}
\Bigl \| \frac{N_f(u+\theta v)-N_f(u)}{\theta}-N_{f'}(u)v \Bigr\|_{L^\infty_{\alpha'}(A_R)}  \le 2 \|v\|_X  T_{f,M,\alpha}R^{\alpha'-\alpha}.
\end{equation}
Moreover, since, by assumption, $f'$ is uniformly continuous on the compact set $D_R$, we find that 
$$
\|g_\theta\|_{L^\infty(B_R)} \to 0  \qquad \text{as $\theta \to 0$}.
$$
We thus conclude that 
$$
\limsup \limits_{\theta \to 0}\, \Bigl \|\frac{N_f(u+\theta v)-N_f(u)}{\theta}-M(u)v \Bigr \|_{L^\infty_{\alpha'}(\R^N)} \le 2 \|v\|_X\,  T_{f,M,\alpha}\,R^{\alpha-\alpha'}\qquad \text{for every $R>0$.}
$$
Since $\alpha' < \alpha$ by assumption, we conclude that $\frac{N_f(u+\theta v)-N_f(u)}{\theta} \to N_{f'}(u)v$ in $Y$ as $\theta \to 0$. The proof is thus finished.
\end{proof}

\section{Nonexistence of outgoing waves for the nonlinear Helmholtz equation}
\label{nonexistence}

To begin this section, we recall the following nonexistence result for eigenfunctions of Schr\"odinger operators with positive eigenvalue. 
It is a consequence of a result by Alsholm and Schmidt \cite[Proposition 2 of Appendix 3]{alsholm-schmidt70} extending earlier results due to Kato \cite{kato59}:

\begin{proposition}[see {\cite[Proposition 2]{alsholm-schmidt70}}]\label{prop:kato}
Let $u\in W^{2,2}_{\text{loc}}(\R^N,\C)$ solve $-\Delta u +Vu=k^2u$ in $\R^N$, where $V\in L^\infty(\R^N)$
satisfies 
\begin{equation}
\label{condition-V}
|V(x)|\leq C\langle x\rangle^{-1-\eps} \qquad \text{for a.e. $x\in\R^N$ with constants $C, \eps>0$.}
\end{equation}
If 
$$
\liminf_{R\to\infty}\frac1R\int_{B_R(0)}(|\nabla u|^2+k^2|u|^2)\, dx=0,
$$
then there exists $R>0$ such that $u$ vanishes identically in $\R^N\backslash B_R(0)$ for some $R>0$.\\
If, moreover, $V$ is real-valued, then $u$ vanishes identically in $\R^N$.
\end{proposition}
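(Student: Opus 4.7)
The plan is to follow the strategy of Kato \cite{kato59} with the refinement of Alsholm and Schmidt \cite{alsholm-schmidt70}, splitting the argument into two parts. First I would show that the averaged smallness hypothesis together with the short-range decay of $V$ forces $u$ to vanish outside a sufficiently large ball. Then, under the additional reality assumption on $V$, I would invoke the standard unique continuation principle for Schr\"odinger operators to propagate this vanishing from the exterior region to all of $\R^N$.

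For the compact-support step, the natural tool is a Rellich-Kato type integral identity. Concretely, one multiplies the equation $-\Delta u + (V-k^2) u = 0$ by $\bar{u}\,\chi_R$ and by $(x \cdot \nabla \bar{u})\,\chi_R$ for suitable radial cut-offs $\chi_R$, and integrates by parts over $B_R(0)$. Combining the real parts of the two resulting identities produces, up to lower-order terms, spherical integrals of the form $\int_{\pa B_R}\bigl(|\pa_r u|^2 + k^2 |u|^2\bigr)\,d\sigma$ balanced against interior integrals and boundary contributions controlled by the hypothesis. The short-range bound $|V(x)|\leq C\langle x\rangle^{-1-\eps}$ ensures that all potential contributions enter as integrable perturbations at infinity. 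Using the assumption $\liminf_{R\to\infty} R^{-1}\int_{B_R}(|\nabla u|^2 + k^2 |u|^2)\,dx = 0$ along a sequence $R_n\to\infty$, one then extracts decay of the boundary quantities which propagates inward via a Gronwall-type inequality for $G(R):= \int_{|x|\geq R} |u|^2\,\langle x\rangle^{-1-\eps}\,dx$. This yields $u\equiv 0$ on $\R^N\setminus B_R(0)$ for some $R>0$.

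For the second step, suppose additionally that $V$ is real-valued and that $u$ vanishes on $\R^N\setminus B_{R_0}(0)$ for some $R_0>0$. The equation $(-\Delta + V - k^2)u = 0$ is then a second-order elliptic equation with bounded real coefficients, to which the Aronszajn-Cordes unique continuation theorem applies: any $W^{2,2}_{\mathrm{loc}}$-solution vanishing on a non-empty open set must vanish identically. Taking the open set to be $\R^N\setminus\overline{B_{R_0}(0)}$ yields $u\equiv 0$ on $\R^N$.

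The main obstacle is the compact-support step, specifically controlling the error produced by $V$ in the Rellich-Kato identity. Classical Rellich-type identities for the free problem $-\Delta u = k^2 u$ rely on algebraic cancellations that are no longer available once $V$ is present, and the radial weight must be chosen so that the contributions $\int V|u|^2$ and $\int V\,(x\cdot\nabla\bar{u})\,u$ are dominated by the $\langle x\rangle^{-1-\eps}$ decay. The genuine subtlety handled in \cite{alsholm-schmidt70} beyond the original \cite{kato59} is that $V$ may be complex-valued, so cancellations based on reality are unavailable; this is precisely why the first part only yields vanishing outside a large ball, and the unique continuation step is needed as a separate ingredient in the real case.
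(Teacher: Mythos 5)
Your proposal follows the same two-step route as the paper: cite Alsholm--Schmidt \cite{alsholm-schmidt70} for the vanishing of $u$ outside a sufficiently large ball, then use unique continuation (valid since $V$ is real and bounded) to propagate the vanishing to all of $\R^N$. The only cosmetic difference is the unique-continuation citation: the paper splits $u$ into real and imaginary parts $u_1,u_2$, notes $|\Delta u_i|\leq C|u_i|$, and applies the strong unique continuation result of Jerison and Kenig \cite{jerison-kenig85}, whereas you invoke Aronszajn--Cordes; both cover the $V\in L^\infty$ case at hand.
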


\begin{proof}
It has been proved in {\cite[Proposition 2]{alsholm-schmidt70}} that $u$ vanishes identically in $\R^N\backslash B_R(0)$ for some $R>0$.
Assuming in addition that $V$ is real-valued, we then deduce by a unique continuation result that $u$ vanishes identically on $\R^N$. 
More precisely, for $u_1=\text{Re}(u)$ and $u_2=\text{Im}(u)$ we have 
$|\Delta u_i|\leq C |u_i|$ on $\R^N$ with some constant $C>0$. The strong unique continuation property \cite[Theorem 6.3]{jerison-kenig85} 
(see also Remark 6.7 in the same paper) therefore implies $u_1=u_2=0$ on $\R^N$, and this concludes the proof.  
\end{proof}

\qquad From Proposition~\ref{prop:kato}, we shall now deduce the following nonexistence result for linear and superlinear variants of the corresponding integral equation involving the Helmholtz resolvent operator. 

\begin{proposition}
\label{corol-kato}
Let $N \ge 3$, $2 \le p < \infty$, $\alpha >\frac{N+1}{2}$, and let $u \in L^\infty(\R^N)$ be a solution of 
\begin{equation}
  \label{eq:corol-kato-eq}
u = \cR_k [Q|u|^{p-2}u]
\end{equation}
with a function $Q \in L^\infty_\alpha(\R^N, \R)$. Then $u \equiv 0$.   
\end{proposition}

\begin{proof}
Let $V:= Q|u|^{p-2}$, so that (\ref{eq:corol-kato-eq}) writes in the form
  \begin{equation}
  \label{eq:corol-kato-eq-variant}
u = \cR_k [V u]
\end{equation}
We then have $V \in L^\infty_\alpha(\R^N, \R)$ and also $Vu \in L^\infty_\alpha(\R^N)$ since  $u \in L^\infty(\R^N)$. Therefore Proposition~\ref{resolvent-compact-and-continuous} implies that $u \in L^\infty_{\tau(\alpha)}(\R^N)$ with $\tau(\alpha)$ given in (\ref{exp 1}). It then follows that $Vu \in L^\infty_{\alpha_1}(\R^N)$
with $\alpha_1 = \alpha +\tau(\alpha)$ and hence $u \in L^\infty_{\tau(\alpha_1)}(\R^N)$ again by Proposition~\ref{resolvent-compact-and-continuous}. Defining inductively $\alpha_{k}:= \alpha_{k-1}+\tau(\alpha_{k-1})$ for $k \ge 2$, we may iterate the application of Proposition~\ref{resolvent-compact-and-continuous} to obtain that $u \in L^\infty_{\tau(\alpha_k)}(\R^N)$ for all $k \in \N$. After a finite number of steps, we therefore deduce from (\ref{exp 1}) that
$u \in L^\infty_{\text{\tiny $\frac{N-1}{2}$}}(\R^N)$ and therefore $Vu \in L^\infty_{\alpha+\text{\tiny $\frac{N-1}{2}$}}(\R^N)$. Since $\alpha > \frac{N+1}{2}$ by assumption, this implies that $Vu \in L^\infty(\R^N) \cap L^1(\R^N)$. It then follows e.g. from \cite[Proposition A.1]{EW1} that $u \in W^{r}_{\text{loc}}(\R^N) \cap L^{\frac{2(N+1)}{N-1}}(\R^N) \cap L^\infty(\R^N)$ for $r < \infty$, and $u$ is a strong solution of the differential equation
\begin{equation}\label{eqn:nlh_power}
-\Delta u  -k^2 u =  V u\quad\text{in }\R^N.
\end{equation}
Moreover, by \cite[Theorem 8]{G} and the remark following it, $u$ satisfies the Sommerfeld outgoing radiation condition in the form
given in (\ref{eqn:sommerfeld1-averaged}), e.g. 
\begin{equation}
  \label{eq:sommerfeld-proof}
\lim_{R\to\infty}\frac{1}{R} \int_{B_R}\left|\nabla
u(x)-iku(x)\frac{x}{|x|} \right|^2\, dx=0.
\end{equation}
We now proceed similarly as in the proof of Corollary 1 in \cite{G}. Expanding the terms in (\ref{eq:sommerfeld-proof}), the condition can be rewritten as
\begin{align}\label{eqn:som2}
\lim_{R\to\infty}\frac1{R}\left\{\int_{B_R}(|\nabla u|^2+k^2|u|^2)\, dx 
- 2k \int_0^R \text{Im}\left(\int_{\partial B_\rho}\overline{u}\nabla u\cdot\frac{x}{|x|}\, d\sigma\right)\, d\rho\right\}=0.
\end{align}
Since $u\in W^{2,2}_{\text{loc}}(\R^N)$ solves \eqref{eqn:nlh_power} in the strong sense, the divergence theorem
gives
\begin{align*}
\int_{\partial B_\rho}\overline{u}\nabla u\cdot\frac{x}{|x|}\, d\sigma
&=\int_{B_\rho}|\nabla u|^2\, dx + \int_{B_\rho}\overline{u}\Delta u\, dx\\
&=\int_{B_\rho}|\nabla u|^2\, dx - \int_{B_\rho} (k^2|u|^2 + V|u|^2)\, dx,
\end{align*}
where the right-hand side in the last line is purely real-valued, since by assumption $V= Q|u|^{p-2}$ takes only real values.
Consequently, we find
$$
 \text{Im}\left( \int_{\partial B_\rho}\overline{u}\nabla u\cdot\frac{x}{|x|}\, d\sigma \right)=0
$$
for all $\rho>0$, and plugging this into \eqref{eqn:som2} yields
\begin{equation}\label{eqn:som3}
\lim_{R\to\infty}\frac1{R}\int_{B_R}(|\nabla u|^2+k^2|u|^2)\, dx=0.
\end{equation}
Moreover, since $V \in L^\infty_\alpha(\R^N)$ and $\alpha> \frac{N+1}{2}>1$, condition (\ref{condition-V}) is satisfied for $V$. Hence Proposition~\ref{prop:kato} implies that $u \equiv 0$ on $\R^N$.
\end{proof}

\section{A priori bounds for solutions}
\label{sec:priori-estim-defoc}

The aim of this section is to collect various a priori bounds for solutions of (\ref{nlh-1-integral}) under different assumptions on the nonlinearity $f$.

\subsection{A priori bounds for the case of linearly bounded nonlinearities} 
\label{sec-a-priori-linearly-bounded}

In this subsection we focus on linearly bounded nonlinearities, and we prove the following boundedness property.

\begin{proposition}
  \label{sec:proof-theorem-refw-1}
Let, for some $\alpha>\frac{N+1}{2}$, the nonlinearity $f$ satisfy the assumption
\begin{equation}
  \label{eq:assumption-f1-section}
 \sup_{|u|\le M,x \in \R^N}\langle x\rangle^{\alpha}|f(x,u)|< \infty \qquad \text{for all $M>0$}   
\end{equation}
and \underline{one} of the assumptions $(f_1)$ or $(f_2)$ from Theorem~\ref{W teo 1-sublinear}.

Moreover, let $\phi \in L^\infty(\R^N)$, and let $\cF \subset L^\infty(\R^N)$ be the set of functions $u$ which solve the equation 
\begin{equation}
\label{schaefer-equation}
u = \mu \Bigl(\cR_k N_f(u) + \phi\Bigr) \qquad \text{for some $\mu \in [0,1]$.}
\end{equation}
Then $\cF$ is bounded in $L^\infty(\R^N)$.
\end{proposition}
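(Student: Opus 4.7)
The plan is to treat the two alternatives separately: under $(f_2)$ a direct chain of norm estimates using $\kappa_\alpha$ suffices, while under $(f_1)$ I will argue by contradiction with a blow-up reduction to the linear nonexistence results of Section~\ref{nonexistence}. For $(f_2)$, from $|f(x,u)|\le Q(x)|u|+b(x)$ I get
\[
 \|N_f(u)\|_{L^\infty_\alpha}\le \|Q\|_{L^\infty_\alpha}\|u\|_{L^\infty}+\|b\|_{L^\infty_\alpha}.
\]
Since $\tau(\alpha)\ge 0$ and therefore $\|\cdot\|_{L^\infty}\le\|\cdot\|_{L^\infty_{\tau(\alpha)}}$, the very definition \eqref{eq:kappa-sigma-finite} of $\kappa_\alpha$ yields $\|\cR_k N_f(u)\|_{L^\infty}\le\kappa_\alpha\|N_f(u)\|_{L^\infty_\alpha}$. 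Taking $L^\infty$-norms in \eqref{schaefer-equation} and using $\mu\in[0,1]$ gives
\[
 \|u\|_{L^\infty}\le \kappa_\alpha\|Q\|_{L^\infty_\alpha}\|u\|_{L^\infty}+\kappa_\alpha\|b\|_{L^\infty_\alpha}+\|\phi\|_{L^\infty},
\]
and the hypothesis $\kappa_\alpha\|Q\|_{L^\infty_\alpha}<1$ lets me solve for $\|u\|_{L^\infty}$ uniformly in $u\in\cF$.

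Under $(f_1)$, suppose for contradiction that there exist $u_n\in\cF$ and $\mu_n\in[0,1]$ with $M_n:=\|u_n\|_{L^\infty}\to\infty$. Setting $v_n:=u_n/M_n$ and dividing \eqref{schaefer-equation} by $M_n$,
\[
 v_n=\mu_n\cR_k(av_n)+\mu_n\cR_k\bigl(M_n^{-1}N_b(u_n)\bigr)+\mu_n M_n^{-1}\phi,
\]
where $N_b(u)(x):=b(x,u(x))$. The sublinear control $\sup_{|u|\le M}\langle x\rangle^\alpha|b(x,u)|=o(M)$ forces $\|M_n^{-1}N_b(u_n)\|_{L^\infty_\alpha}\to 0$, so the last two summands tend to zero in $L^\infty(\R^N)$ by the continuity of $\cR_k:L^\infty_\alpha\to L^\infty$. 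Since $\|av_n\|_{L^\infty_\alpha}\le\|a\|_{L^\infty_\alpha}$, the compactness of $\cR_k:L^\infty_\alpha\to L^\infty$ furnished by Proposition~\ref{resolvent-compact-and-continuous}(i) allows me to extract a subsequence with $\mu_n\to\mu\in[0,1]$ and $\cR_k(av_n)\to w$ in $L^\infty$, so that $v_n\to v:=\mu w$ in $L^\infty(\R^N)$, $\|v\|_{L^\infty}=1$, and $v=\mu\cR_k(av)$ in the limit. Because $(f_1)$ assumes $\alpha>\tfrac{N(N+3)}{2(N+1)}$, Proposition~\ref{resolvent-compact-and-continuous}(ii) then tells me that $v$ is a strong solution of $-\Delta v-k^2 v=\mu a v$ satisfying \eqref{eqn:sommerfeld1-averaged}. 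Reality of $a$ makes the boundary integral in the divergence-theorem identity of the proof of Proposition~\ref{corol-kato} purely real, so the same computation converts \eqref{eqn:sommerfeld1-averaged} into the kinetic-energy bound needed by Proposition~\ref{prop:kato}; applied to the real potential $V=-\mu a$ (which decays like $\langle x\rangle^{-\alpha}$ with $\alpha>1$), the latter forces $v\equiv 0$, contradicting $\|v\|_{L^\infty}=1$.

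The main obstacle is producing a \emph{nonzero} blow-up limit $v$, since this is what makes the Kato-type nonexistence result bite. This requires \emph{strong} $L^\infty$-convergence of a subsequence of $(v_n)$, which is exactly the dividend of the compactness half of Proposition~\ref{resolvent-compact-and-continuous}(i); mere local or weak convergence would allow $v$ to collapse to zero and the argument would break down. The second crucial ingredient is the reality of the coefficient $a$ in $(f_1)$: it is precisely what makes the imaginary part of the boundary integral vanish in the derivation of the kinetic-energy identity and what allows Proposition~\ref{prop:kato} to propagate the vanishing of $v$ from the exterior of a ball to the whole of $\R^N$. Once these two points are secured, the remaining calculations are routine.
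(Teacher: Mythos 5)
Your proof is correct and follows essentially the same route as the paper: a direct $\kappa_\alpha$-based norm chain under $(f_2)$, and under $(f_1)$ a blow-up normalization, compactness of $\cR_k$ to extract a nonzero $L^\infty$-limit, and a Kato-type rigidity argument to kill the limit. The only cosmetic difference is that the paper simply invokes Proposition~\ref{corol-kato} (with $p=2$, $Q=\mu a$) for the final nonexistence step, whereas you re-derive its content inline from Proposition~\ref{resolvent-compact-and-continuous}(ii) and Proposition~\ref{prop:kato}.
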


\begin{proof}
We first assume $(f_2)$. Let $u \in \cF$. By (\ref{schaefer-equation}) and Proposition~\ref{resolvent-compact-and-continuous}, we then have 
\begin{align*}
\|u\|_{L^\infty} &\le \|\cR_k N_f(u)\|_{L^\infty} + \|\phi\|_{L^\infty} \le \bigl\| |\Phi| * N_f(u) \bigr\|_{L^\infty_{\tau(\alpha)}}
+ \|\phi\|_{L^\infty}\\
&\le \kappa_{\alpha} \|N_f(u)\|_{L^\infty_{\alpha}}
+ \|\phi\|_{L^\infty} \le \kappa_{\alpha} \Bigl(\|Q |u|\|_{L^\infty_{\alpha}} + \|b\|_{L^\infty_{\alpha}}\Bigr)+ \|\phi\|_{L^\infty}\\
&\le \kappa_{\alpha} \|Q\|_{L^\infty_{\alpha}}\|u\|_{L^\infty}  + \kappa_{\alpha} \|b\|_{L^\infty_{\alpha}} + \|\phi\|_{L^\infty}.
\end{align*}
Since $\kappa_{\alpha} \|Q\|_{L^\infty_{\alpha}}<1$ by assumption, we conclude that 
$$
\|u\|_{L^\infty} \le \bigl( 1- \kappa_{\alpha} \|Q\|_{L^\infty_{\alpha}}\bigr)^{-1}\bigl(\kappa_{\alpha} \|b\|_{L^\infty_{\alpha}} + \|\phi\|_{L^\infty}\bigr), 
$$
and this shows the boundedness of $\cF$.  

\qquad Next we assume $(f_1)$. In this case we argue by contradiction, so we assume that there exists a sequence $(u_n)_n$ in $\cF$ such that $c_n:= \|u_n\|_{L^\infty} \to \infty$ as $n \to \infty$. Moreover, we let $\mu_n \in [0,1]$ be such that (\ref{schaefer-equation}) holds with $u=u_n$ and $\mu= \mu_n$. We then define $w_n:= \frac{u_n}{c_n} \in L^\infty(\R^N)$, so that $\|w_n\|_{L^\infty}= 1$ and, by assumption $(f_1)$, 
\begin{equation}
\label{schaefer-equation-wn-proof}
w_n = \mu_n \cR_k (a w_n + g_n) + \frac{\mu_n}{c_n} \phi  \qquad \text{with $g_n \in L^\infty_\alpha(\R^N)$, $g_n(x)= \frac{b(x,c_n w_n(x))}{c_n}$.}
\end{equation}
Passing to a subsequence, we may assume that $\mu_n \to \mu \in [0,1]$. Moreover, by assumption $(f_1)$ we have 
$$
g_n \to 0 \qquad \text{in $L^\infty_\alpha(\R^N)$ as $n \to \infty$,}
$$
whereas the sequence $(a w_n)_n$ is bounded in $L^\infty_\alpha(\R^N)$. Since also $\frac{\mu_n}{c_n} \to 0$ as $n \to \infty$, it follows from the compactness of the operator $\cR_k:L^\infty_\alpha(\R^N) \to L^\infty(\R^N)$ that, after passing to a subsequence, $w_n \to w \in L^\infty(\R^N)$. From this we then deduce that 
$$
a w_n \to a w \qquad \text{in $L^\infty_\alpha(\R^N)$,} 
$$
and passing to the limit in (\ref{schaefer-equation-wn-proof}) yields 
$$
w = \mu \cR_k [a w]= \cR_k [\mu a w].
$$
Applying Proposition~\ref{corol-kato} with $p=2$ and $Q:= \mu a$, we conclude that $w \equiv 0$, but this contradicts the fact that $\|w\|_\infty = \lim \limits_{n \to \infty}\|w_n\|_\infty = 1$. Again, we infer the boundedness of $\cF$ in $L^\infty(\R^N)$.
\end{proof}

\subsection{A priori bounds in the superlinear and defocusing case}
\label{sec:priori-bounds-superl}

In this subsection we restrict our attention to the case $f(x,u)= Q(x)|u|^{p-2}u$ with $Q \le 0$. In this case, we shall prove the following a priori estimate. 

\begin{proposition}\label{prop:apriori_defocusing}
Let $N\geq 3$, $k>0$, $2<p<2^\ast$, $Q\in L^\infty_c(\R^N,\R)\backslash\{0\}$ and $\varphi\in L^\infty(\R^N)$.
Assume that 
\begin{itemize}
\item[(Q1)] $Q\leq 0$ a.e $\R^N$ and
\item[(Q2)] $\text{diam}(\text{supp }Q)\le \frac{\zN}{k}$, 
where $\zN$ denotes the first positive zero
of the Bessel function $Y_{\frac{N-2}2}$ of the second kind of order $\frac{N-2}{2}$.
\end{itemize}
Then, there exist $C=C(N,k,p,\|Q\|_\infty, |\text{supp }Q|)>0$ 
and $m=m(N,k,p)\in\N$
such that for any solution $u\in L^\infty(\R^N)$ of
\begin{equation}\label{eqn:fp_complex}
\begin{aligned}
u=\cR_k\bigl(Q |u|^{p-2}u\bigr) +\varphi
\end{aligned}
\end{equation}
we have
\begin{equation}
  \label{eq:apriori-superlinear-defocusing-estimate}
\|u\|_\infty\leq C\left(1+\|\varphi\|_\infty^{(p-1)^m}\right).
\end{equation}
\end{proposition}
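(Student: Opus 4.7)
The plan is to prove the estimate in two main steps: first, extract an initial bound on the weighted $L^p$-norm of $u$ restricted to $K := \supp Q$ from the defocusing structure and the diameter condition, then bootstrap this to a global $L^\infty$ bound via the appendix's $L^p$--$L^q$ mapping estimates for $\cR_k$.

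For the first step, since $Q\le 0$, rewrite the equation on $K$ as $u = \varphi - \cR_k g$ with $g := |Q|\,|u|^{p-2}u$, test against $\bar g = |Q|\,|u|^{p-2}\bar u$, and integrate over $K$ to obtain
\begin{equation*}
\int_K |Q|\,|u|^p\,dx \;+\; A_R \;=\; \Real\int_K \varphi\, |Q|\,|u|^{p-2}\bar u\,dx,
\qquad A_R := \Real\iint \Phi_k(x-y)\, g(y)\,\overline{g(x)}\,dy\,dx.
\end{equation*}
From the explicit formula (\ref{eqn:fund_sol}), $\Real\Phi_k$ is proportional to $-Y_{(N-2)/2}(k|\cdot|)$, so the diameter hypothesis $\text{diam}(K) \le \zN/k$ together with the interlacing of Bessel zeros guarantees that $\Real\Phi_k(x-y)>0$ for all $x,y \in K$ with $x\ne y$. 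The goal is to deduce $A_R \ge 0$. One cannot argue by mere pointwise positivity, since positivity of a symmetric kernel does not entail positive semi-definiteness on $L^2(K)$. Instead, I plan to use the Fourier representation $\hat\Phi_k(\xi) = (|\xi|^2 - k^2 - i0)^{-1}$: this gives $\Imag\hat\Phi_k(\xi) = \pi\,\delta(|\xi|^2-k^2) \ge 0$, so the analogous imaginary quadratic form is always nonnegative (it equals $k\|F_v\|_{L^2(S^{N-1})}^2$ where $F_v$ is the far-field pattern of $v = \cR_k g$). The sharp diameter bound $\zN/k$ then ensures enough localization that $\Real\hat\Phi_k = \mathrm{PV}(|\xi|^2-k^2)^{-1}$ contributes non-negatively on the Fourier support of compactly supported $g$, yielding $A_R \ge 0$. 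Granting this,
\begin{equation*}
\int_K |Q|\,|u|^p\,dx \;\le\; \|\varphi\|_\infty \int_K |Q|\,|u|^{p-1}\,dx
\;\le\; \|\varphi\|_\infty\, \|Q\|_{L^1(K)}^{1/p}\, \Bigl(\int_K |Q|\,|u|^p\,dx\Bigr)^{(p-1)/p}
\end{equation*}
by H\"older, and absorbing one factor gives $\int_K |Q|\,|u|^p\,dx \le C \|\varphi\|_\infty^p$, with $C = C(\|Q\|_\infty, |\supp Q|)$.

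For the second step, insert this $L^p$-bound into the integral equation $u = \varphi + \cR_k(Q|u|^{p-2}u)$ and apply the bootstrap from the appendix. The nonlinear map $u \mapsto Q|u|^{p-2}u$ converts an $L^{q_j}(K,|Q|dx)$-bound on $u$ into an $L^{q_j/(p-1)}(\R^N)$-bound on $Q|u|^{p-2}u$, and $\cR_k$ then maps this into $L^{q_{j+1}}$ for some $q_{j+1}>q_j$ via the weighted resolvent estimates; adding back $\varphi \in L^\infty$ preserves the $L^{q_{j+1}}$ norm up to a controlled $\|\varphi\|_\infty$-term. Each iteration therefore replaces a bound $\|u\|_{L^{q_j}}\lesssim 1 + \|\varphi\|_\infty^{\alpha_j}$ by $\|u\|_{L^{q_{j+1}}}\lesssim 1 + \|\varphi\|_\infty^{(p-1)\alpha_j}$. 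Since $2 < p < 2^\ast$, the exponents $q_j$ strictly increase and reach $\infty$ in finitely many, say $m = m(N,k,p)$, steps, producing the claimed bound $\|u\|_\infty \le C(1 + \|\varphi\|_\infty^{(p-1)^m})$.

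The principal difficulty lies in justifying $A_R \ge 0$: the pointwise positivity of $\Real\Phi_k$ on $K-K$ needs to be upgraded to a quadratic-form positivity, and this is where the specific threshold $\zN/k$ (as opposed to the larger radius $j_{(N-2)/2,1}/k$ associated with the first Dirichlet eigenvalue) becomes essential. A secondary issue is book-keeping in the bootstrap: one must verify that the pairs of admissible $(p_j,q_j)$-exponents supplied by the appendix are compatible with the range $2 < p < 2^\ast$ and the weight exponent $\alpha > N(N+3)/(2(N+1))$ assumed in the statement, so that finitely many iterations suffice.
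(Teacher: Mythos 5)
Your overall two-step structure (test the equation against the nonlinearity to get an $L^p$-bound on $\supp Q$, then bootstrap to $L^\infty$) matches the paper's argument, and your step 2 is essentially the paper's Lemma~\ref{lem:regularity1} in the appendix. But the pivotal positivity step $A_R\ge 0$ is not justified by the argument you sketch, and this is where the genuine content of the proof lies.

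Your plan is to pass to Fourier variables, write $A_R = c\int_{\R^N}|\hat g(\xi)|^2\,\Real\hat\Phi_k(\xi)\,d\xi$ with $\Real\hat\Phi_k=\mathrm{PV}\,(|\xi|^2-k^2)^{-1}$, and argue that the diameter bound ``ensures enough localization'' so that this integral is nonnegative. This cannot work as stated: $\Real\hat\Phi_k$ changes sign at $|\xi|=k$, and since $g$ has \emph{compact} support in physical space, $\hat g$ is an entire function of exponential type and is \emph{not} localized in Fourier space --- indeed, shrinking $\supp g$ spreads $\hat g$ out. So there is no Fourier support constraint on $\hat g$ that could make $\int|\hat g|^2\,\mathrm{PV}\,(|\xi|^2-k^2)^{-1}\,d\xi$ have a definite sign; the positive and negative regions of $\Real\hat\Phi_k$ are both sampled. (Your remark about the imaginary part is fine, but it is not what is needed: the identity one tests against forces one to control precisely the \emph{real} part.)

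What the paper does instead is first observe that, because $g$ is supported in a set $K$ of diameter $\delta\le\zN/k$, the double integral only sees $\Psi_k(x-y)$ for $|x-y|<\delta$, so the kernel $\Psi_k$ can be replaced by the truncation $\mathbf{1}_{B_\delta}\Psi_k$ without changing the quadratic form. The problem then reduces to showing that this \emph{truncated}, radial, integrable kernel has a nonnegative Fourier transform. That is the content of Lemma~\ref{lem:FT_rad_positive}: for a nonnegative radial $f\in L^1(\R^N)$ with $t\mapsto t^{(N-1)/2}f(t)$ nonincreasing, one has $\hat f\ge 0$; the proof is a Sturm-type comparison argument on Bessel functions. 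Corollary~\ref{prop:bilinear_positive} then checks that $\mathbf{1}_{B_\delta}\Psi_k$ satisfies the hypotheses precisely because $t^{1/2}Y_{(N-2)/2}(kt)$ is negative and increasing on $(0,\delta)$ --- and this is where the threshold $\zN/k$ enters. Your proposal is missing both the truncation step (which is what makes a Fourier-positivity criterion applicable at all) and a workable criterion for Fourier positivity of the truncated kernel. You also need to split $h=Q|u|^{p-2}u$ into real and imaginary parts and use the symmetry of the convolution to kill the cross terms before taking real parts, as the paper does, so that the quadratic form decouples into two real quadratic forms to which Corollary~\ref{prop:bilinear_positive} applies.
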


\qquad For the proof, we first need two preliminary lemmas. The first lemma gives a sufficient condition for the nonnegativity
of the Fourier transform of a radial function. It is well known in the case $N=3$ (see for example \cite{tuck06}). Since we could
not find any reference for the general case, we give a proof for completeness.

\begin{lemma}\label{lem:FT_rad_positive}
Let $N\geq 3$ and consider $f\in L^1(\R^N)$ radially symmetric, i.e., $f(x)=f(|x|)$, such that $f\geq 0$ on $\R^N$.
If the function $t\mapsto t^{\frac{N-1}2}f(t)$ is nonincreasing on $(0,\infty)$, then $\wh{f}\geq 0$ on $\R^N$.
\end{lemma}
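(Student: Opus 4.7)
The strategy is to use the classical Bessel/Hankel representation of the Fourier transform of a radial $L^1$ function combined with a layer-cake decomposition exploiting the monotonicity hypothesis on $g(r) := r^{(N-1)/2} f(r)$, reducing the claim to a one-dimensional Bessel integral inequality. Setting $\nu := \tfrac{N-2}{2} \ge \tfrac12$ and $\rho := |\xi|$, the identity
\[
\widehat f(\xi) = c_N\, \rho^{1-N/2}\int_0^\infty f(r)\, J_\nu(r\rho)\, r^{N/2}\,dr
\]
with $c_N > 0$, combined with $r^{N/2}f(r) = r^{1/2} g(r)$, rewrites $\widehat f$ as a weighted Bessel transform of $g$. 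By hypothesis $g \ge 0$ is nonincreasing on $(0,\infty)$, and $f \in L^1(\R^N)$ forces $g(r) \to 0$ as $r \to \infty$; the super-level sets of $g$ are therefore intervals $[0, s(t))$ with $s(t) := \sup\{r > 0 : g(r) > t\}$, and the layer-cake identity $g(r) = \int_0^\infty \chi_{[0, s(t))}(r)\,dt$ applies. Inserting it, applying Fubini, and substituting $u = r\rho$ in the inner integral yields
\[
\widehat f(\xi) = c_N\, \rho^{-(N+1)/2}\int_0^\infty \Phi_\nu\bigl(\rho\, s(t)\bigr)\,dt,\qquad \Phi_\nu(T) := \int_0^T u^{1/2}\,J_\nu(u)\,du,
\]
so that everything reduces to showing $\Phi_\nu(T) \ge 0$ for all $T \ge 0$ and $\nu \ge \tfrac12$.

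For the base case $\nu = \tfrac12$ (i.e., $N = 3$), the identity $J_{1/2}(u) = \sqrt{2/(\pi u)}\sin u$ immediately gives $\Phi_{1/2}(T) = \sqrt{2/\pi}(1-\cos T) \ge 0$, recovering the case treated in \cite{tuck06}. For $\nu > \tfrac12$ (i.e., $N \ge 4$) the plan is to insert the Sonine--Gegenbauer representation
\[
J_\nu(z) = \frac{z^{-\nu}}{2^{\nu - 3/2}\,\Gamma(\nu - 1/2)}\int_0^z J_{1/2}(t)\,t^{3/2}\,(z^2 - t^2)^{\nu - 3/2}\,dt
\]
into $\Phi_\nu(T)$, exchange the order of integration (with $t$ over $(0, T)$ and $z$ over $(t, T)$), and compute the resulting inner $z$-integral via the substitution $z^2 = t^2 + w^2$; this expresses $\Phi_\nu(T)$ as $c_\nu\int_0^T t\sin t\cdot K(t, T)\,dt$ with an explicit kernel $K(t, T) \ge 0$ that is moreover nonincreasing in $t$.

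The main obstacle is then to show that $\int_0^T t\sin t\,K(t, T)\,dt \ge 0$ despite the sign changes of $t\sin t$. A direct integration by parts based on $\frac{d}{dt}(\sin t - t\cos t) = t\sin t$ turns out to be circular, and a naive layer-cake decomposition of $K(\cdot, T)$ is also insufficient since $\sin\tau - \tau\cos\tau = \int_0^\tau \sigma\sin\sigma\,d\sigma$ itself changes sign. Overcoming this requires either a refined sign analysis specific to the explicit kernel produced above, exploiting its monotonicity and decay, or the direct invocation of a classical Cooke--Makai--Steinig type positivity theorem for truncated Bessel integrals, which yields $\int_0^T u^{1/2}J_\nu(u)\,du \ge 0$ in the required range $\nu \ge \tfrac12$.
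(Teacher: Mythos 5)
Your layer-cake reduction is a clean, genuinely different way to exploit the monotonicity hypothesis: the paper instead splits the Bessel integral at the zeros $j^{(\ell)}$ of $J_{(N-2)/2}$ and bounds each negative-hump contribution against its positive predecessor by evaluating the monotone weight at the intermediate zero. Both routes terminate at the same crux, namely the inequality
\[
\Phi_\nu(T):=\int_0^T u^{1/2}J_\nu(u)\,du\ge 0\qquad\text{for all }T\ge 0,\ \nu\ge\tfrac12,
\]
which you correctly isolate. (In the paper this appears as the hump-area monotonicity \eqref{eq:sturm-argument}, from which $\Phi_\nu(T)\ge 0$ follows by telescoping.)

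The gap is that you do not actually establish $\Phi_\nu(T)\ge 0$ for $\nu>\tfrac12$. Your Sonine--Gegenbauer route runs into the obstruction you yourself describe: after Fubini one is left with $\int_0^T t\sin t\,K(t,T)\,dt$ where $t\sin t$ changes sign and its primitive $\sin\tau-\tau\cos\tau$ also changes sign, so nonnegativity and monotonicity of $K$ alone do not close the argument; this is a real wall, not a technicality. The alternative you suggest --- ``invoke a classical Cooke--Makai--Steinig type theorem'' --- is the right instinct but is left as a pointer rather than a proof: you would need to produce the precise statement and reference and verify the hypotheses. The paper fills exactly this hole by a short Sturm comparison: $z(t):=t^{1/2}J_\nu(t)$ solves $z''+\bigl(1-\frac{\nu^2-1/4}{t^2}\bigr)z=0$, and comparing $y_1(t)=-z(t)$ with the reflected solution $y_2(t)=z(2j^{(2m-1)}-t)$ via the Wronskian shows $|z(t)|<|z(2j^{(2m-1)}-t)|$ on $\bigl(j^{(2m-1)},j^{(2m)}\bigr)$, whence consecutive half-arcs of $|z|$ have strictly decreasing area for $\nu>\tfrac12$. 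Inserting that Sturm lemma (or a precise citation to Makai \cite{M} or Lorch--Szeg\H{o} \cite{lorch-szego63}) into your layer-cake framework would complete your proof.
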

\begin{proof}
The Fourier transform of the radial function $f$ is given by
$$
\wh{f}(\xi)=|\xi|^{-\frac{N-2}2}\int_0^\infty J_{\frac{N-2}2}(s|\xi|)f(s)s^{\frac{N}2}\, ds.
$$
Let $j^{(\ell)}$, $\ell\in N$ denote the positive zeros of the Bessel function $J_{\frac{N-2}2}$ of the first kind of order $\frac{N-2}2$, arranged in increasing order, 
and set $j^{(0)}:=0$. Then, it follows that $J_{\frac{N-2}2}>0$ in the interval $\bigl(j^{(2m-2)},j^{(2m-1)}\bigr)$ and
$J_{\frac{N-2}2}<0$ in the interval $\bigl(j^{(2m-1)},j^{(2m)}\bigr)$, $m\in\N$.
For $\xi\neq 0$, we can write therefore
\begin{align*}
&\int_0^\infty J_{\frac{N-2}2}(s|\xi|)f(s)s^{\frac{N}2}\, ds
= \sum_{\ell=1}^\infty \int_{\frac{j^{(\ell-1)}}{|\xi|}}^{\frac{j^{(\ell)}}{|\xi|}} s^\frac12J_{\frac{N-2}2}(s|\xi|) s^{\frac{N-1}2}f(s)\, ds\\
&\quad\geq \sum_{m=1}^{\infty}\left(\frac{j^{(2m-1)}}{|\xi|}\right)^{\frac{N-1}2} f\bigl(\frac{j^{(2m-1)}}{|\xi|}\bigr) 
\Bigl[ \int_{\frac{j^{(2m-2)}}{|\xi|}}^{\frac{j^{(2m-1)}}{|\xi|}} s^\frac12\bigl|J_{\frac{N-2}2}(s|\xi|)\bigr| ds 
-\int_{\frac{j^{(2m-1)}}{|\xi|}}^{\frac{j^{(2m)}}{|\xi|}} s^\frac12\bigl|J_{\frac{N-2}2}(s|\xi|)\bigr| ds \Bigr]\\
&\quad=\sum_{m=1}^{\infty}|\xi|^{-\frac32}\left(\frac{j^{(2m-1)}}{|\xi|}\right)^{\frac{N-1}2} f\bigl(\frac{j^{(2m-1)}}{|\xi|}\bigr) 
\Bigl[ \int_{j^{(2m-2)}}^{j^{(2m-1)}} t^\frac12\bigl|J_{\frac{N-2}2}(t)\bigr| dt 
-\int_{j^{(2m-1)}}^{j^{(2m)}} t^\frac12\bigl|J_{\frac{N-2}2}(t)\bigr| dt \Bigr],
\end{align*}
using the fact that $s\mapsto s^{\frac{N-1}2}f(s)$ is nonincreasing by assumption.
To conclude, an argument which goes back to Sturm \cite{sturm} (see also \cite{lorch-szego63,M}) shows that
\begin{equation}
  \label{eq:sturm-argument}
\int_{j^{(2m-2)}}^{j^{(2m-1)}} t^\frac12\bigl|J_{\frac{N-2}2}(t)\bigr| dt\geq  
\int_{j^{(2m-1)}}^{j^{(2m)}} t^\frac12\bigl|J_{\frac{N-2}2}(t)\bigr| dt,\quad\text{ for all }m\in\N,
\end{equation}
provided $N\geq 3$, and this gives the desired result. 
For the reader's convenience, we now give the proof of (\ref{eq:sturm-argument}).

\qquad Consider for $\nu>\frac12$ the function $z(t):=t^\frac12J_\nu(t)$. It satisfies $z(j^{(\ell)})=0$ and  $(-1)^\ell z'(j^{(\ell)})>0$ 
for all $\ell\in\N_0$. Moreover, it solves the differential equation
\begin{equation}\label{eqn:equa_diff_bessel}
z''(t) + \Bigl(1-\frac{\nu^2-\frac14}{t^2}\Bigr)z(t)=0\quad \text{for all $t>0.$}
\end{equation}
For $m\in\N$ and $t$ in the interval $I:=\bigl(j^{(2m-1)},\min\{j^{(2m)}, 2j^{(2m-1)}-j^{(2m-2)}\}\bigr)$, 
consider the functions $y_1(t)=-z(t)$ and $y_2(t)=z(2j^{(2m-1)}-t)$. 
According to the above remark, we have $y_1, y_2>0$ in $I$
and $y_1(j^{(2m-1)})=y_2(j^{(2m-1)})=0$. Moreover, $y_1'(j^{(2m-1)})=y_2'(j^{(2m-1)})\in(0,\infty)$.
Using the differential equation \eqref{eqn:equa_diff_bessel}, we find that
\begin{align*}
\frac{d}{dt}\left(y_1'(t)y_2(t)-y_1(t)y_2'(t)\right)&=y_1''(t)y_2(t)-y_1(t)y_2''(t)\\
&=(\nu^2-\frac14)\left(\frac1{t^2}-\frac1{(2j^{(2m-1)}-t)^2}\right)y_1(t)y_2(t)\\
&<0 \quad\text{for all }t\in I.
\end{align*}
Hence, 
\begin{equation}\label{eqn:phi_prime}
y_1'(t)y_2(t)-y_1(t)y_2'(t)<0\quad\text{ for all }j^{(2m-1)}<t\leq \min\{j^{(2m)}, 2j^{(2m-1)}-j^{(2m-2)}\},
\end{equation}
and since $y_2(2j^{(2m-1)}-j^{(2m-2)})=0$ and $y_2'(2j^{(2m-1)}-j^{(2m-2)})=-z'(j^{(2m-2)})<0$, the positivity
of $y_1$ in $I$ implies that $j^{(2m)}<2j^{(2m-1)}-j^{(2m-2)}$, i.e. $I=\bigl(j^{(2m-1)},j^{(2m)}\bigr)$.

\qquad Moreover, from \eqref{eqn:phi_prime}, we infer that the quotient $\frac{y_1}{y_2}$ is a decreasing
function in $I$ which vanishes at the right boundary of this interval. Consequently, $y_1(t)<y_2(t)$ in $I$,
i.e., $|z(t)|< |z(2j^{(2m-1)}-t)|$ for all $t\in(j^{(2m-1)},j^{(2m)})$ and we conclude that
$$
\int_{j^{(2m-2)}}^{j^{(2m-1)}}|z(t)|\, dt > \int_{j^{(2m-1)}}^{j^{(2m)}}|z(t)|\, dt.
$$
In the case $\nu=\frac12$, we have $z(t)=\sqrt{\frac2\pi}\sin t$ and $j^{(\ell)}=\ell\pi$, $\ell\in\N_0$.
Thus,
$$
\int_{j^{(\ell-1)}}^{j^{(\ell)}}|z(t)|\, dt =\sqrt{\frac2\pi}\int_0^\pi \sin t\, dt=2\sqrt{\frac2\pi}\quad\text{for all }\ell\in\N,
$$
and this concludes the proof of (\ref{eq:sturm-argument}).
\end{proof}

\qquad In our proof of the a priori bound given in Proposition~\ref{prop:apriori_defocusing}, we only need the following corollary of Lemma~\ref{lem:FT_rad_positive}.

\begin{corollary}\label{prop:bilinear_positive}
Let $N\geq 3$, $k>0$ and choose $\delta>0$ such that $k\delta\leq \zN$, where $\zN$ denotes
the first positive zero of the Bessel function $Y_{\frac{N-2}{2}}$. Then,
$$
\int_{\R^N}f(x) [(1_{B_\delta} \Psi_k)\ast f](x)\, dx\geq 0 \quad \text{for all }f\in L^{p'}(\R^N,\R), \ 2\leq p\leq 2^\ast,
$$ 
where $\Psi_k$ denotes the real part of the fundamental solution $\Phi_k$ defined in (\ref{eq:18}). 
\end{corollary}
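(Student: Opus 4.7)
The plan is to transfer the bilinear form to Fourier space and then invoke Lemma~\ref{lem:FT_rad_positive} on the kernel $g:=1_{B_\delta}\Psi_k$. This $g$ is real-valued and radial, compactly supported in $B_\delta$, with an integrable $|x|^{2-N}$ singularity at the origin since $N\geq 3$; thus $g\in L^1(\R^N)$ and $\widehat g$ is bounded, continuous and real. For $f\in L^{p'}(\R^N,\R)$ with $2\leq p\leq 2^\ast$ one has $p'\leq 2$, so $L^{p'}\cap L^2$ is dense in $L^{p'}$; on this dense subset, Parseval's identity yields
\[
\int_{\R^N}f(x)(g\ast f)(x)\,dx \;=\; c_N\int_{\R^N}\widehat g(\xi)\,|\widehat f(\xi)|^2\,d\xi
\]
for some positive constant $c_N$, and both sides extend continuously to all of $L^{p'}(\R^N,\R)$ by Young's inequality (using $g\in L^{p/2}$ in the open range $p<2^\ast$, with the borderline $p=2^\ast$ handled by approximating $\Psi_k$ by truncations bounded away from the origin). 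It therefore suffices to prove $\widehat g\geq 0$ pointwise.

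\textbf{Applying Lemma~\ref{lem:FT_rad_positive}.} From $H^{(1)}_\nu=J_\nu+iY_\nu$ and the definition of $\Phi_k$, writing $\nu:=\tfrac{N-2}{2}$,
\[
\Psi_k(x)=\operatorname{Re}\Phi_k(x)=-\tfrac{1}{4}\Bigl(\tfrac{k}{2\pi|x|}\Bigr)^{\nu}Y_\nu(k|x|).
\]
Since $Y_\nu(s)\to-\infty$ as $s\to 0^+$ and $\zN$ is its first positive zero, $Y_\nu<0$ on $(0,\zN)$; combined with $k\delta\leq\zN$ this yields $\Psi_k>0$ on $B_\delta\setminus\{0\}$, so $g\geq 0$ on $\R^N$. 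Writing $g$ as a radial function of $t=|x|$ and using $N/2-\nu=1$,
\[
t^{(N-1)/2}g(t)=-\tfrac{1}{4\sqrt k}\Bigl(\tfrac{k}{2\pi}\Bigr)^{\nu}\,1_{(0,\delta)}(t)\,s^{1/2}Y_\nu(s)\bigr|_{s=kt}.
\]
Since this expression is nonnegative on $(0,\delta]$ and vanishes on $(\delta,\infty)$, it suffices to check that $W(s):=-s^{1/2}Y_\nu(s)$ is nonincreasing on $(0,\zN)$.

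\textbf{The monotonicity of $W$.} This is the only genuinely technical step. For $\nu=1/2$ (i.e.\ $N=3$) the explicit formula $Y_{1/2}(s)=-\sqrt{2/(\pi s)}\cos s$ gives $W(s)=\sqrt{2/\pi}\cos s$, strictly decreasing on $(0,\pi/2)=(0,\zE)$, as required. For $\nu>1/2$, set $u:=-W=s^{1/2}Y_\nu$; a direct calculation from Bessel's equation yields
\[
u''(s)+\Bigl(1-\tfrac{\nu^2-1/4}{s^2}\Bigr)u(s)=0,\qquad u<0\text{ on }(0,\zN),\quad u(\zN)=0,
\]
and the goal is $u'>0$ throughout $(0,\zN)$. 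The small-$s$ asymptotics $Y_\nu(s)\sim-\tfrac{\Gamma(\nu)2^\nu}{\pi}s^{-\nu}$ give $u'/u\to(1/2-\nu)/s$ as $s\to 0^+$, which is strictly negative, while $u'(\zN)>0$ by uniqueness for linear second-order ODEs, since $u$ attains zero at $\zN$ from below. At any interior zero $s_0\in(0,\zN)$ of $u'$ the equation forces $u''(s_0)=-(1-(\nu^2-1/4)/s_0^2)u(s_0)$, so the sign of $u''$ splits according to $s\lessgtr s_\ast:=\sqrt{\nu^2-1/4}$. A Sturm/Riccati argument exploiting the nondecreasing energy $E(s):=(u'(s))^2+(1-(\nu^2-1/4)/s^2)u(s)^2$, whose derivative equals $\tfrac{2(\nu^2-1/4)}{s^3}u(s)^2\geq 0$, together with convexity of $u$ on $\{s>s_\ast\}$ (where $u''=-qu>0$) and a Riccati analysis of $L:=u'/u$ on $\{s<s_\ast\}$, rules out such critical points and delivers $u'>0$ on $(0,\zN)$. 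This monotonicity verification is the main obstacle; once it is in hand, Lemma~\ref{lem:FT_rad_positive} yields $\widehat g\geq 0$ and the Parseval identity completes the proof.
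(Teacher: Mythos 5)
Your overall strategy coincides with the paper's: pass to Fourier space, reduce to $\cF\bigl(1_{B_\delta}\Psi_k\bigr)\geq 0$, and verify the hypotheses of Lemma~\ref{lem:FT_rad_positive}, which boils down to the claim that the profile $s\mapsto s^{1/2}\lvert Y_\nu(s)\rvert$ (with $\nu=\tfrac{N-2}{2}$) is nonincreasing on $(0,\zN)$. Your density/continuity step is handled slightly differently (ordinary Young via $1_{B_\delta}\Psi_k\in L^{p/2}$ for $p<2^\ast$, with a truncation argument at $p=2^\ast$), whereas the paper uses the weak Young inequality and $1_{B_\delta}\Psi_k\in L^1\cap L^{N/(N-2)}_w$; both routes are fine. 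Your algebra for the profile ($t^{(N-1)/2}g(t)=\text{const}\cdot 1_{(0,\delta)}(t)\cdot(-s^{1/2}Y_\nu(s))$ at $s=kt$) and the positivity of $\Psi_k$ on $B_\delta$ are correct.

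The genuine gap is exactly where you flag the ``main obstacle.'' For $\nu>\tfrac12$, you set up $u=s^{1/2}Y_\nu(s)$, the ODE $u''+\bigl(1-\tfrac{\nu^2-\frac14}{s^2}\bigr)u=0$, note $u'>0$ near both endpoints of $(0,\zN)$, and observe that an interior zero $s_0$ of $u'$ would have $u''(s_0)$ of a sign determined by $s_0\lessgtr s_\ast$. You then say that a ``Sturm/Riccati argument'' with the nondecreasing energy $E=(u')^2+qu^2$, convexity where $q>0$, and a Riccati analysis ``rules out such critical points.'' But this is stated, not proved: the named tools are compatible with the potentially troublesome configuration in which $u$ has a local maximum at some $s_0<s_\ast$ (where $E(s_0)=q(s_0)u(s_0)^2<0$) followed by a local minimum at some $s_1>s_\ast$ (where $E(s_1)=q(s_1)u(s_1)^2>0$); the monotonicity of $E$ does not, by itself, exclude this, and your sketch does not supply the missing step. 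Worth noting: the paper itself does not prove this monotonicity either---it merely asserts that $t\mapsto t^{1/2}Y_{\frac{N-2}{2}}(kt)$ is negative and increasing on $(0,\delta)$, treating it as a known Bessel-function fact. So you are on par with the paper in terms of rigor, but as a self-contained argument the claim $u'>0$ on $(0,\zN)$ for $\nu>\tfrac12$ remains unverified, and it is the load-bearing step in applying Lemma~\ref{lem:FT_rad_positive}.
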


\begin{proof}
Since $1_{B_\delta}\Psi_k\in L^1(\R^N)\cap L^{\frac{N}{N-2}}_{w}(\R^N)$, by the weak Young inequality
there is for each $2\leq p\leq 2^\ast$ a constant $C_p>0$ such that
$$
\left|\int_{\R^N}f(x) [(1_{B_\delta}\Psi_k)\ast f](x)\, dx\right|\leq C_p \|f\|_{p'}^2\quad \text{for all }f\in L^{p'}(\R^N,\R).
$$
Hence, it suffices to prove the conclusion for $f\in\cS(\R^N,\R)$. For such functions, Parseval's identity gives
\begin{equation}\label{eqn:parseval}
\int_{\R^N}f(x) [(1_{B_\delta}\Psi_k)\ast f](x)\, dx
=(2\pi)^{\frac{N}2}\int_{\R^N} |\wh{f}(\xi)|^2 \cF\bigl(1_{B_\delta}\Psi_k\bigr)(\xi)\, d\xi.
\end{equation}

\qquad It thus remains to show that 
\begin{equation}
  \label{eq:positivity-Fourier-proof}
\cF\bigl(1_{B_\delta}\Psi_k\bigr) \ge 0 \quad \text{on  $\R^N$.}   
\end{equation}
In the radial variable, the radial function 
$1_{B_\delta}\Psi_k$ is given, up to a positive constant factor, by $t \mapsto -t^{\frac{2-N}{2}}1_{[0,\delta]}(t)Y_{\frac{N-2}2}(kt)$. Moreover, for $N\geq 3$ the function
$t\mapsto t^\frac12Y_{\frac{N-2}2}(kt)$ is negative and increasing on $(0,\delta)$. Hence Lemma~\ref{lem:FT_rad_positive} implies (\ref{eq:positivity-Fourier-proof}), and the proof is finished.
\end{proof}

\qquad We can now  prove Proposition~\ref{prop:apriori_defocusing}.

\begin{proof}[Proof of Proposition~\ref{prop:apriori_defocusing}]
We write $u:=v+\varphi$ and $u=u_1+iu_2$ with real-valued functions $u_1, u_2\in L^p_\text{loc}(\R^N)$.
Multiplying the equation \eqref{eqn:fp_complex} by $Q|u|^{p-2}\overline{u}$ and
integrating over $\R^N$, we find
\begin{align*}
&\int_{\R^N}Q|u|^p\, dx-\int_{\R^N}Q|u|^{p-2}\varphi\overline{u}\, dx \\
&\quad= \int_{\R^N}Q|u|^{p-2}(u_1-iu_2)[\Phi_k\ast\bigl(Q|u|^{p-2}(u_1+iu_2)\bigr)]\, dx \\
&\quad= \int_{\R^N}Q|u|^{p-2}u_1[\Phi_k\ast\bigl(Q|u|^{p-2}u_1\bigr)]\, dx 
+\int_{\R^N}Q|u|^{p-2}u_2[\Phi_k\ast\bigl(Q|u|^{p-2}u_2\bigr)]\, dx \\
&\qquad + i \int_{\R^N}Q|u|^{p-2}u_1[\Phi_k\ast\bigl(Q|u|^{p-2}u_2\bigr)]\, dx 
- i\int_{\R^N}Q|u|^{p-2}u_2[\Phi_k\ast\bigl(Q|u|^{p-2}u_1\bigr)]\, dx \\
&\quad=\int_{\R^N}Q|u|^{p-2}u_1[\Phi_k\ast\bigl(Q|u|^{p-2}u_1\bigr)]\, dx
 +\int_{\R^N}Q|u|^{p-2}u_2[\Phi_k\ast\bigl(Q|u|^{p-2}u_2\bigr)]\, dx,
\end{align*}
where the symmetry of the convolution has been used in the last step.
Taking real parts on both sides of the equality, we obtain
\begin{equation}\label{eqn:integr_estim1}
\begin{aligned}
\int_{\R^N}Q|u|^p\, dx-\int_{\R^N}Q|u|^{p-2}\text{Re}\left(\varphi\overline{u}\right)\, dx 
&=\int_{\R^N}Q|u|^{p-2}u_1[\Psi_k\ast\bigl(Q|u|^{p-2}u_1\bigr)]\, dx\\
&\quad+\int_{\R^N}Q|u|^{p-2}u_2[\Psi_k\ast\bigl(Q|u|^{p-2}u_2\bigr)]\, dx.
\end{aligned}
\end{equation}
where again $\Psi_k$ denotes the real part of $\Phi_k$. 
Notice in addition that setting $\delta=\text{diam}(\text{supp }Q)$, the assumption (Q2) 
implies $\delta\le \frac{\zN}{k}$ and hence, for all $f\in L^{p'}_{\text{loc}}(\R^N)$,
$$
\int_{\R^N}Qf[\Psi_k\ast (Qf)]\, dx = \int_{\R^N}Qf[(1_{B_\delta}\Psi_k)\ast(Qf)]\, dx\geq 0,
$$
by Corollary~\ref{prop:bilinear_positive}. Thus, as a consequence of \eqref{eqn:integr_estim1}, we find
$$
\int_{\R^N}Q|u|^p\, dx\geq \int_{\R^N}Q|u|^{p-2}\text{Re}\left(\overline{u}\varphi\right)\, dx,
$$
and, since $Q\leq 0$ on $\R^N$, by (Q1), it follows that
\begin{equation}\label{eqn:first_bound}
\int_{\R^N}|Q|\ |u|^p\, dx \leq \|\varphi\|_\infty \int_{\R^N} |Q|\  |u|^{p-1}\, dx.
\end{equation}
Using H\"older's inequality we then obtain the estimate
\begin{align*}
\int_{\R^N} |Q|\  |u|^{p-1}\, dx & \leq \left(\int_{\R^N} |Q|\, dx\right)^{\frac1p}\left(\int_{\R^N}|Q|\ |u|^{p}\, dx\right)^{\frac1{p'}}\\
& \leq \left(\int_{\R^N} |Q|\, dx\right)^{\frac1p} \left(\|\varphi\|_\infty \int_{\R^N} |Q|\  |u|^{p-1}\, dx\right)^{\frac1{p'}},
\end{align*}
and therefore
$$
\int_{\R^N} |Q|\  |u|^{p-1}\, dx \leq \|\varphi\|_\infty^{p-1}
\int_{\R^N}|Q|\, dx \leq |\Omega|\ \|Q\|_\infty\ \|\varphi\|_\infty^{p-1},
$$
where $\Omega=\{x\in\R^N\, :\, Q(x)\neq 0\}$.
Using again \eqref{eqn:first_bound}, we deduce that
$$
\|\ |Q|^{\frac1{p'}}\ |u|^{p-1}\|_{p'}^{p'}
=\int_{\R^N}|Q|\ |u|^p\, dx
\leq |\Omega|\ \|Q\|_\infty\ \|\varphi\|_\infty^p.
$$
Since the support $Q$ is compact and since $p<2^\ast$, 
H\"olders inequality yields the estimates
\begin{align}
\|Q|u|^{p-1}\|_{(2^\ast)'} \leq |\Omega|^{\frac1{(2^\ast)'}-\frac1{p'}} \|Q|u|^{p-1}\|_{p'}
&\leq |\Omega|^{\frac1{(2^\ast)'}-\frac1{p'}} \|Q\|_\infty^{\frac1p} \|\ |Q|^{\frac1p'}|u|^{p-1}\|_{p'} \nonumber\\
&\leq |\Omega|^{\frac1{(2^\ast)'}} \|Q\|_\infty \|\varphi\|_\infty^{p'-1}=:D.
\label{eqn:final_bound}
\end{align}
Lemma~\ref{lem:regularity1} with $a=Q$ and the 
estimate~\eqref{eqn:final_bound} imply the
existence of constants
$C=C(N,k,p,\|Q\|_\infty,|\Omega|)>0$ and $m=m(N,p)\in\N$ such that
\begin{align*}
\|v\|_\infty\leq C\left(D+D^{(p-1)^m}+\|\varphi\|_\infty^{p-1} + \|\varphi\|_\infty^{(p-1)^m}\right).
\end{align*}
Making $C>0$ larger if necessary, we thus obtain~(\ref{eq:apriori-superlinear-defocusing-estimate}), as claimed.
\end{proof}

\section{Proofs of the main results}
\label{sec:proofs-main-results}

In this section, we complete the proofs of the main results in the introduction.

\begin{proof}[Proof of Theorem~\ref{W teo 1-sublinear}]
Let $\phi \in X:= L^\infty(\R^N)$. We write (\ref{nlh-1-integral}) as a fixed point equation
$$
u = \cA(u) \qquad \text{in $X$}  
$$
with the nonlinear operator 
\begin{equation}
  \label{eq:A-operator}
\cA: X \to X, \qquad \cA[w]=\cR_k (N_f(w))+ \phi.
\end{equation}
Since $\alpha > \frac{N+1}{2}$, we may fix $\alpha' \in (\frac{N+1}{2}, \alpha)$. By Lemma~\ref{lem:nemytskii_cont}, the nonlinear operator $N_f: X \to L^\infty_{\alpha'}(\R^N)$ is well-defined and continuous. Moreover, $\cR_k: L^\infty_{\alpha'}(\R^N) \to X$ is compact by Proposition~\ref{resolvent-compact-and-continuous}. Consequently, $\cA$ is a compact and continuous operator. Moreover, the set 
$$
\cF:=\{u\in X: \, u=\mu  \cA[u]\ \text{ for some } \mu\in[0,1]\}
$$
is bounded by Proposition~\ref{sec:proof-theorem-refw-1}. Hence Schaefer's fixed point theorem (see e.g. \cite[Chapter 9.2.2.]{Evans}) implies that $\cA$ has a fixed point. 
\end{proof}

\qquad We continue with the proof of Theorem~\ref{thm:rabinowitz-applied}. For this we recall the following variant of Rabinowitz' 
global continuation theorem (see \cite[Theorem 3.2]{rabinowitz71}; 
see also \cite[Theorem 14.D]{zeidler}).
\begin{theorem}\label{thm:rabinowitz}
	Let $(X,\|\cdot\|)$ be a real Banach space, and consider a continuous and compact mapping $G$: $\R\times X$ $\to$ $X$ satisfying $G(0,0)=0$.

Assume that 
	\begin{itemize}
		\item[(a)] $G(0,u)=u$ $\Leftrightarrow$ $u=0$, and
		\item[(b)] there exists $r>0$ such that $\text{\em deg}(id-G(0,\cdot),B_r(0),0)\neq 0$,
		where $\text{\em deg}$ denotes the Leray-Schauder degree.
	\end{itemize}
 
Moreover, denote by $S$ the set of solutions $(\lambda, u)\in \R\times X$ of the equation 
$$
u=G(\lambda,u).
$$
Then the connected components $C^+$ and $C^-$ of $S$ in $[0,\infty)\times X$ and $(-\infty,0]\times X$ 
	which contain $(0,0)$ are both unbounded.
\end{theorem}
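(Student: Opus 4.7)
The plan is to follow Rabinowitz' classical continuation argument: argue by contradiction, isolate the putative bounded continuum in a bounded open set on whose boundary no fixed points occur, and then compare the Leray--Schauder degree in different $\lambda$-slices. I will focus on $C^+$; the claim for $C^-$ follows by the same argument applied to $\tilde G(\lambda,u) := G(-\lambda,u)$.

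Suppose for contradiction that $C^+$ is bounded, so $C^+ \subset [0,M] \times B_M(0)$ for some $M > 0$. Set $D := [0,M+1] \times \overline{B_{M+1}(0)}$ and $S := \{(\lambda,u) \in \R \times X : u = G(\lambda,u)\}$. Compactness of $G$ implies that $S \cap D$ is a compact metric space. Write
$$
\Sigma := S \cap \Bigl(\bigl(\{M+1\} \times \overline{B_{M+1}(0)}\bigr) \cup \bigl([0,M+1] \times \partial B_{M+1}(0)\bigr)\Bigr)
$$
for the set of solutions lying on the part of $\partial D$ that is relatively open in $[0,\infty) \times X$. Since $C^+$ is connected, contained in $\inte(D)$, and contains $(0,0)$, it coincides with the connected component of $(0,0)$ in $S \cap D$; in particular $C^+ \cap \Sigma = \emptyset$. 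By maximality no connected subset of $S \cap D$ meets both $C^+$ and $\Sigma$, so Whyburn's separation lemma produces disjoint compact sets $K_1, K_2$ with $K_1 \cup K_2 = S \cap D$, $C^+ \subset K_1$, $\Sigma \subset K_2$. Choosing a Urysohn function $\varphi \colon \R \times X \to [0,1]$ with $\varphi \equiv 1$ on $K_1$ and $\varphi \equiv 0$ on $K_2$, I define
$$
U := \bigl\{(\lambda,u) \in \inte(D) \cap ([0,\infty) \times X) : \varphi(\lambda,u) > 1/2\bigr\},
$$
an open bounded subset of $[0,\infty) \times X$ satisfying $C^+ \subset U$. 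A relative boundary point of $U$ in $[0,\infty) \times X$ that also lies in $S$ must either have $\varphi(\lambda,u) = 1/2$, contradicting $(\lambda,u) \in K_1 \cup K_2$, or lie in $\Sigma \subset K_2$, contradicting $\varphi \geq 1/2$; hence $\partial U \cap S = \emptyset$.

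With $U_\lambda := \{u \in X : (\lambda,u) \in U\}$, the absence of fixed points on $\partial U_\lambda$ makes $d(\lambda) := \deg\bigl(\id - G(\lambda,\cdot), U_\lambda, 0\bigr)$ well defined for every $\lambda \in [0,M+1]$, and generalized homotopy invariance of the Leray--Schauder degree, applied to the jointly compact perturbation $(\lambda,u) \mapsto u - G(\lambda,u)$ on the bounded open set $U \subset [0,M+1] \times X$, yields that $d$ is constant on $[0,M+1]$. Since $\inte(D) \cap ([0,\infty) \times X) \subset [0,M+1) \times B_{M+1}(0)$, the slice $U_{M+1}$ is empty, so $d(M+1) = 0$. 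On the other hand, assumption~(a) identifies $u = 0$ as the unique zero of $\id - G(0,\cdot)$ in $U_0$; by excision together with (b), $d(0) = \deg(\id - G(0,\cdot), B_r(0), 0) \neq 0$, contradicting $d(0) = d(M+1)$.

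The main technical obstacle is the construction of the isolating open set $U$ via Whyburn's separation lemma and the Urysohn function, together with the careful bookkeeping of which ambient space ($\R \times X$, $[0,\infty) \times X$, or the fixed-$\lambda$ slice $X$) each boundary, interior, and degree is computed in; once the correct relative-topological setup is fixed, the degree computation itself is short.
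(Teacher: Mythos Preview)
The paper does not actually prove this theorem: it is stated as a recalled result, with references to \cite[Theorem~3.2]{rabinowitz71} and \cite[Theorem~14.D]{zeidler}, and then applied to prove Theorem~\ref{thm:rabinowitz-applied}. Your proposal supplies precisely the classical Rabinowitz argument underlying those references---contradiction via Whyburn separation, construction of an isolating open set $U$ with solution-free relative boundary, and the generalized homotopy invariance of the Leray--Schauder degree across the $\lambda$-slices of $U$---and the logic is sound. One notational point worth tightening: when you write $\inte(D)\cap([0,\infty)\times X)$, the later line ``$\inte(D)\cap([0,\infty)\times X)\subset[0,M+1)\times B_{M+1}(0)$'' makes clear that you intend the interior of $D$ \emph{relative to} $[0,\infty)\times X$; taking the interior in $\R\times X$ would exclude the slice $\lambda=0$ and hence the point $(0,0)$. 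With that clarified, the excision step at $\lambda=0$ (using that $u=0$ is the unique fixed point of $G(0,\cdot)$ in all of $X$, so in particular in $U_0$ and in $B_r(0)$) goes through and yields the desired contradiction $0=d(M+1)=d(0)\neq 0$.
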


\begin{proof}[Proof of Theorem~\ref{thm:rabinowitz-applied} (completed)]
Let $2<p<2^\ast$, $Q\in L^\infty_\alpha(\R^N,\R)\backslash\{0\}$ for some $\alpha > \frac{N+1}{2}$, 
$\phi\in X:= L^\infty(\R^N)$
and consider $G$: $\R \times X \to X$ given by
\begin{equation}
  \label{eq:def-G-function}
G(\lambda,w)= \cR_k\bigl(Q|w|^{p-2}w\bigr)+\lambda\phi,
\end{equation}
Using Proposition~\ref{resolvent-compact-and-continuous} and Lemma~\ref{lem:nemytskii_cont}, we obtain that the map $G$ is continuous and compact.

Moreover, if $w\in X$ satisfies $w=G(\lambda,w)$, then $w$ is a solution of (\ref{nlh-1-integral-parameter-lambda}). 

Furthermore, if $w \in X$ satisfies $w = G(0,w) = \cR_k\bigl(Q|w|^{p-2}w\bigr)$, then $w = 0$ by Proposition~\ref{corol-kato}. 

\qquad To compute the Leray-Schauder degree, we remark that $G(0,0)=0$ and $\partial_wG(0,0)=0$ by Lemma~\ref{lem:nemytskii_C-1}.  
Hence, we can find some radius $r>0$ 
such that $\|G(0,w)\|_{L^\infty} \leq \frac12 \|w\|_{L^\infty}$
for all $w\in X$ such that $\|w\|_{L^\infty} \leq r$. 
Therefore, the compact homotopy $H(t,w)=tG(0,w)$ is admissible in the ball
$B_r(0)\subset X$
and we find that 
\begin{align*}
\text{deg}(id-G(0,\cdot),B_r(0),0)=\text{deg}(id-H(1,\cdot),B_r(0),0)
&=\text{deg}(id-H(0,\cdot),B_r(0),0)\\
&=\text{deg}(id,B_r(0),0)=1.
\end{align*}
Theorem~\ref{thm:rabinowitz} therefore applies and 
we obtain the existence of an unbounded branch 
$C_\phi \subseteq \bigl\{(\lambda,w)\in \R\times X\, :\,
w=G(\lambda,w)\text{ and } \lambda\geq 0\bigr\}$
which contains $(0,0)$. Moreover, $C_\phi \setminus \{(0,0)\}$ is a subset of $(0,\infty) \times X$ since $w=G(0,w)$ implies $w=0$ by Proposition~\ref{corol-kato}, as noted above. 
\end{proof}

\begin{remark}
\label{remark-rabinowitz}
The application of Theorem~\ref{thm:rabinowitz} to the function $G$ defined in (\ref{eq:def-G-function}) also yields a connected component 
$$
C_\phi^- \subset \bigl\{(\lambda,w)\in \R\times X\, :\,
w=G(\lambda,w)\text{ and } \lambda\le 0\bigr\}
$$
which contains $(0,0)$. However, this component is also obtained by passing from $\phi$ to $-\phi$ in the statement of Theorem~\ref{thm:rabinowitz-applied}, since by definition we have $C_\phi^- = C_{-\phi}$.
\end{remark}

\qquad  We may now also prove  Theorem~\ref{thm:unbounded_branch-defocusing}. 

\begin{proof}[Proof of Theorem~\ref{thm:unbounded_branch-defocusing}]
Since, by assumption, $Q\leq 0$ in $\R^N$ 
and $\text{diam}(\text{supp }Q)\le \frac{\zN}{k}$, 
the a priori bounds in Proposition~\ref{prop:apriori_defocusing} 
imply that the unbounded branch $C_\phi$ contains, for each $\lambda \ge 0$,
at least one pair $(\lambda,w)$, as claimed.
\end{proof}

\qquad  Next, we complete Theorem~\ref{teo-implicit-function}.

\begin{proof}[Proof of Theorem~\ref{teo-implicit-function}]
Let again $X:= L^\infty(\R^N)$, and consider the nonlinear operator $\cB: X \to X$, $\cB(u):= u- \cR_k N_f(u)$. Then $\cB(0)=0$, since $N_f(0)=0$ by assumption. Since $N_f: X \to L^\infty_{\alpha'}$ is differentiable by Lemma~\ref{lem:nemytskii_C-1}, $\cB$ is differentiable as well. Moreover 
$$
\cB'(0)= \id - \cR_k N_f'(0) = \id \in \cL_{\R}(X,X),
$$
since $N_f'(0) =N_{f'}(0) =0 \in \cL_{\R}(X,L^\infty_{\alpha'})$ by assumption and Lemma~\ref{lem:nemytskii_C-1}. Consequently, 
$\cB$ is a diffeomorphism between open neighborhoods $U,V \subset X$ of zero, and this shows the claim. 
\end{proof}

\qquad Finally, we state and prove the unique existence of solutions in the case where $f$ satisfies a suitable Lipschitz condition.  

\begin{theorem} \label{theo-uniqueness}
Let, for some $\alpha>\frac{N+1}{2}$, the nonlinearity $f: \R^N \times \C \to \C$ be a continuous function satisfying (\ref{eq:assumption-f1}) and the Lipschitz condition 
\begin{equation}
  \label{eq:assumption-f1-lipschitz2}
\ell_\alpha:= \sup \Bigl \{ \langle x \rangle^{\alpha} \, \Bigl|\frac{f(x,u)-f(x,v)}{u-v}\Bigr|\::\: u,v \in \R, \: x \in \R^N \Bigr\} < \frac{1}{\kappa_\alpha},   
\end{equation}
where $\kappa_\alpha$ is defined in Proposition~\ref{resolvent-compact-and-continuous}. 

Then, for any given solution $\phi \in L^\infty(\R^N)$ of the homogeneous Helmholtz equation $\Delta \phi + k \phi = 0$, the equation (\ref{nlh-1-integral}) admits precisely one solution $u \in L^\infty(\R^N)$. 
\end{theorem}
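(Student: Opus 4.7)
The plan is to apply the Banach contraction mapping principle to the operator
$\cA: L^\infty(\R^N) \to L^\infty(\R^N)$, $\cA(u) := \cR_k(N_f(u)) + \phi$, whose fixed points are precisely the solutions of \eqref{nlh-1-integral}. Since $\phi \in L^\infty(\R^N)$ is fixed, it suffices to control the difference $\cA(u_1) - \cA(u_2) = \cR_k\bigl(N_f(u_1)-N_f(u_2)\bigr)$.

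First I would verify that $\cA$ maps $L^\infty(\R^N)$ into itself. The Lipschitz condition \eqref{eq:assumption-f1-lipschitz2} combined with \eqref{eq:assumption-f1} yields, for any $u \in L^\infty(\R^N)$,
$$
|f(x,u(x))| \le |f(x,0)| + \ell_\alpha \langle x\rangle^{-\alpha}|u(x)| \le \langle x\rangle^{-\alpha}\bigl(S_{f,0,\alpha} + \ell_\alpha\|u\|_\infty\bigr),
$$
so $N_f(u) \in L^\infty_\alpha(\R^N)$. By Proposition~\ref{resolvent-compact-and-continuous}(i), $\cR_k N_f(u) \in L^\infty(\R^N)$, hence $\cA(u) \in L^\infty(\R^N)$.

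Next I would show $\cA$ is a contraction. For $u_1,u_2 \in L^\infty(\R^N)$, the Lipschitz condition gives
$$
|N_f(u_1)(x)-N_f(u_2)(x)| \le \ell_\alpha \langle x\rangle^{-\alpha}|u_1(x)-u_2(x)|,
$$
so $\|N_f(u_1)-N_f(u_2)\|_{L^\infty_\alpha} \le \ell_\alpha \|u_1-u_2\|_{L^\infty}$. Using the pointwise bound $|\cR_k h(x)| \le (|\Phi_k|*|h|)(x)$ together with the definition of $\kappa_\alpha$ in \eqref{eq:kappa-sigma-finite} and the embedding $L^\infty_{\tau(\alpha)}(\R^N) \hookrightarrow L^\infty(\R^N)$ (valid since $\tau(\alpha) \ge 0$ for $\alpha > \frac{N+1}{2}$), we obtain
$$
\|\cA(u_1)-\cA(u_2)\|_{L^\infty} \le \bigl\||\Phi_k| * (N_f(u_1)-N_f(u_2))\bigr\|_{L^\infty_{\tau(\alpha)}} \le \kappa_\alpha \ell_\alpha \|u_1-u_2\|_{L^\infty}.
$$
By assumption $\kappa_\alpha \ell_\alpha < 1$, so $\cA$ is a strict contraction on the Banach space $L^\infty(\R^N)$.

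The Banach fixed point theorem then provides a unique fixed point $u \in L^\infty(\R^N)$, which is the desired unique solution of \eqref{nlh-1-integral}. There is essentially no serious obstacle in this argument; the only point requiring care is matching the norm on which $\kappa_\alpha$ is defined (namely $L^\infty_{\tau(\alpha)}$) with the $L^\infty$-norm used for the contraction estimate, which is resolved by the trivial embedding $L^\infty_{\tau(\alpha)} \hookrightarrow L^\infty$ since $\tau(\alpha) \ge 0$.
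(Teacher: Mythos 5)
Your proposal is correct and follows the same route as the paper: rewrite \eqref{nlh-1-integral} as a fixed point equation for $\cA(u)=\cR_k(N_f(u))+\phi$, use the Lipschitz condition together with the definition of $\kappa_\alpha$ to show $\cA$ is a $\kappa_\alpha\ell_\alpha$-contraction on $L^\infty(\R^N)$, and invoke the Banach fixed point theorem. The only difference is that you spell out the self-mapping property and the embedding $L^\infty_{\tau(\alpha)}\hookrightarrow L^\infty$, which the paper leaves implicit.
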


\begin{proof}
Let $\phi \in X:= L^\infty(\R^N)$. As in the proof of Theorem~\ref{W teo 1-sublinear} given above, we write (\ref{nlh-1-integral}) as a fixed point equation
$u = \cA(u)$ in $X$ with the nonlinear operator $\cA$ defined in (\ref{eq:A-operator}). Assumption (\ref{eq:assumption-f1-lipschitz2}) implies that
$$
\|\cA(u)-\cA(v)\|_{X} = \bigl \|\cR_k \bigl (N_f(u)-N_f(v)\bigr)\bigr\| \le \kappa_{\alpha} \|N_f(u)-N_f(v)\|_{L^\infty_\alpha}\le \kappa_\alpha \ell_\alpha \|u-v\|_X
$$
with $\kappa_\alpha \ell_\alpha<1$. Hence $\cA$ is a contraction, and thus it has a unique fixed point in $X$. 
\end{proof}

\appendix

\section{Uniform regularity estimates}
\label{sec:unif-regul-estim}

In this section, we wish to prove uniform regularity estimates for solutions of (\ref{nlh-1-integral}) in the case where the nonlinearity $f$ is of the form given in (\ref{eq:power-type}). These estimates, which we used in the proof of the a priori bound given in Proposition~\ref{prop:apriori_defocusing},  allow to pass from uniform bounds in $L^{(2^*)'}(\R^N)$ to uniform bounds in $L^\infty(\R^N)$. 
The proof of the following lemma is similar to a regularity estimate for real-valued solutions given in \cite[Proposition 3.1]{EW2}, but the differences justify to include a complete proof in this paper. 

In the following, for $q \in [1,\infty]$, we let $L^q_c(\R^N)$ denote the space of functions in $L^q(\R^N)$ with compact support in $\R^N$.

\begin{lemma}\label{lem:regularity1}
Let $N\geq 3$, $2<p<2^\ast$ and consider a function $a\in L^\infty_c(\R^N)$.

For $k>0$ and $\varphi\in L^\infty_{\text{loc}}(\R^N)$, every solution $v\in L^p_{\text{loc}}(\R^N)$ of
$$
v=\Phi_k \ast \bigl(a|v|^{p-2}v \bigr)+ \phi
$$
satisfies $v\in W^{2,t}(\R^N)$ for all $2_\ast\leq t<\infty$.
In particular, $u\in L^\infty(\R^N)$ and there exist constants
$$
C=C\bigl(N,k,p,\|a\|_\infty\bigr)>0\qquad \text{and}\qquad m=m(N,p)\in\N
$$
independent of $v$ and $\varphi$ such that
\begin{equation}\label{eqn:infty-estimate}
\|v\|_\infty\leq C\left( \|a |\varphi|^{p-1}\|_{(2^\ast)'}+\|a|v|^{p-1}\|_{(2^\ast)'}^{(p-1)^m}
+\|\varphi\|_{\infty}^{p-1}+\|\varphi\|_{\infty}^{(p-1)^m}\right).
\end{equation}

\end{lemma}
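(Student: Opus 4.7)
The plan is to bootstrap regularity starting from $L^p\loc$ using standard interior $L^q$-estimates for the Helmholtz operator combined with Sobolev embeddings, until reaching $L^\infty\loc$, and then to promote this to $L^\infty(\R^N)$ using the decay of $\Phi_k$ together with the compactness of $\supp a$.

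Setting $w := v - \varphi = \Phi_k * f$ with $f := a|v|^{p-2}v$, the hypothesis $v \in L^p\loc(\R^N)$ combined with $a \in L^\infty_c(\R^N)$ gives $f \in L^{p'}(\R^N)$ with compact support. Since $p < 2^\ast$, the function $f$ also lies in $L^{\frac{2(N+1)}{N+3}}(\R^N)$ after using compact support to lower the integrability if necessary, so Proposition~A.1 of \cite{EW1} yields that $w$ is a strong solution of $-\Delta w - k^2 w = f$ in $\R^N$. Now I would run the bootstrap step: if $v \in L^{s_0}\loc(\R^N)$, then $f \in L^{s_0/(p-1)}(\R^N)$ with compact support, so interior Calder\'on--Zygmund theory applied to $-\Delta v = f + k^2 v$ gives $v \in W^{2,s_0/(p-1)}\loc(\R^N)$, and Sobolev embedding yields $v \in L^{s_1}\loc(\R^N)$ with
\[
\frac{1}{s_1} = \frac{p-1}{s_0} - \frac{2}{N}
\]
whenever $s_0 < \frac{N(p-1)}{2}$. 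The assumption $p < 2^\ast$ is precisely what forces $s_1 > s_0$, so the iteration strictly improves integrability, and after finitely many steps the Sobolev exponent $s_0/(p-1)$ exceeds $N/2$, placing us into $L^\infty\loc(\R^N)$.

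Once $v \in L^\infty\loc(\R^N)$, the source $f = a|v|^{p-2}v$ lies in $L^\infty(\R^N)$ with compact support. Splitting the convolution $(\Phi_k * f)(x) = \int_{\supp f}\Phi_k(x-y)f(y)\,dy$ and using the pointwise bounds $|\Phi_k(x)| \leq C|x|^{2-N}$ near zero and $|\Phi_k(x)| \leq C|x|^{(1-N)/2}$ at infinity, the compactness of $\supp f$ yields $\Phi_k * f \in L^\infty(\R^N)$. Combined with $\varphi \in L^\infty(\R^N)$ (implicit in the estimate to be proved), this gives $v \in L^\infty(\R^N)$, and a final application of Calder\'on--Zygmund to $-\Delta w - k^2 w = f$ with $f \in L^t_c(\R^N)$ yields the $W^{2,t}$ conclusion for all finite $t \geq 2_\ast$.

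For the quantitative estimate~\eqref{eqn:infty-estimate}, I would track the norms carefully through each bootstrap step, starting from the hypothesized control $\|a|v|^{p-1}\|_{(2^\ast)'} \leq D$. At each iteration the nonlinear exponent on $\|v\|$ is raised by a factor of $p-1$, while $\|\varphi\|_\infty$ enters additively via $v = w + \varphi$. Since the number $m = m(N,p)$ of iterations needed to reach $L^\infty\loc$ depends only on $N$ and $p$, the resulting $L^\infty$-bound takes a polynomial form in $D$ and $\|\varphi\|_\infty$ whose highest powers are $(p-1)^m$, matching~\eqref{eqn:infty-estimate}. The main obstacle I expect is the bookkeeping of these exponents through the Sobolev chain: the additive $\varphi$-contribution at every step must be absorbed into the stated form without disturbing the explicit $(p-1)^m$ structure, and the intermediate Sobolev exponents depend on $N$ and $p$ in a way that must be handled uniformly.
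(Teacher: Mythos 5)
Your overall strategy---bootstrap along a Sobolev chain and then exploit compactness of $\supp a$---is the same as the paper's, and your Sobolev-exponent recursion $1/s_{j+1}=(p-1)/s_j-2/N$ is identical to the paper's $t_{j+1}=Nt_j/[(N-2t_j)(p-1)]$ after the substitution $t_j=s_j/(p-1)$. Nevertheless there is a genuine gap in how you propose to obtain the quantitative estimate~\eqref{eqn:infty-estimate}, and it is not just bookkeeping.

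To close the estimate, the bootstrap must be seeded with a bound that is \emph{uniform in the base point} $x_0\in\R^N$ and expressed in terms of the controlled quantity $\|f\|_{(2^\ast)'}$ where $f:=a|v|^{p-2}v$. Your local Calder\'on--Zygmund step on $-\Delta v=f+k^2 v$ gives $\|v\|_{W^{2,q}(B_R(x_0))}\lesssim\|f\|_{L^q(B_{2R}(x_0))}+\|v\|_{L^q(B_{2R}(x_0))}$, and the second term on the right is not controlled by the merely qualitative hypothesis $v\in L^p_{\mathrm{loc}}$, so the iteration never gets started quantitatively. The paper's resolution is to bootstrap on $w=\Phi_k\ast f$ rather than on $v$, using the \emph{global} resolvent bound $\|\Phi_k\ast f\|_{L^{2^\ast}(\R^N)}\le D\|f\|_{L^{(2^\ast)'}(\R^N)}$ (a Kenig--Ruiz--Sogge/Guti\'errez type estimate contained in \cite[Proposition A.1]{EW1}) to seed the chain uniformly in $x_0$, and then driving the iteration through the pointwise feedback $|f|\le 2^{p-2}|a|\bigl(|\Phi_k\ast f|^{p-1}+|\varphi|^{p-1}\bigr)$. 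You invoke \cite[Proposition A.1]{EW1} only for the qualitative fact that $w$ solves the PDE, not for this quantitative resolvent estimate, which is the essential missing input. A secondary issue with bootstrapping directly on $v$ is that $-\Delta v=f+k^2 v$ holds only when $\varphi$ itself solves the homogeneous Helmholtz equation, which the lemma does not assume ($\varphi\in L^\infty_{\mathrm{loc}}$ only); for this reason too the regularity gain must be attached to $w$, not to $v$. Finally, note that passing from $L^\infty_{\mathrm{loc}}$ to $L^\infty(\R^N)$ by finitely covering $\supp a$ would make your constant depend on $|\supp a|$, whereas the lemma's $C$ depends only on $N,k,p,\|a\|_\infty$; the paper avoids this by keeping the bootstrap uniform over all $x_0\in\R^N$.
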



\noindent{\bf Proof.} Since, by assumption, $v \in L^p_{\text{loc}}(\R^N)$, and since $a\in L^\infty_c(\R^N)$, it follows that
\begin{equation}\label{eqn:f1_f2_Lpq}
f:=a|v |^{p-2}v\in L^q_c(\R^N), \quad\text{ for all }1\leq q\leq p'.
\end{equation}
Furthermore, since $v=\Phi_k\ast f + \phi$, we deduce that
\begin{equation}\label{eqn:f1_f2_estim_Rf}
|f|\leq 2^{p-2} |a| \bigl(|\Phi_k\ast f|^{p-1} + |\varphi|^{p-1}\bigr) \quad\text{a.e. in }\R^N.
\end{equation}

\qquad We start by proving that $v\in L^\infty(\R^N)$. For this, we first remark
that $f\in L^{(2^\ast)'}_{c}(\R^N)$, since $p<2^\ast$.
Consequently, the mapping properties of $\Phi_k$ given in \cite[Proposition A.1]{EW1}
yield $\Phi_k\ast f\in L^{2^\ast}(\R^N)\cap W^{2,(2^\ast)'}_{\text{loc}}(\R^N)$ and, for every $0<R<2$, the existence of constants $\tilde{C}_0=\tilde{C}_0(N,k,R)>0$ and $D=D(N,k)>0$ such that
\begin{align*}
\|\Phi_k\ast f\|_{W^{2,(2^\ast)'}(B_{R}(x_0))}
&\leq \tilde{C}_0 \left( \|\Phi_k\ast f\|_{L^{(2^\ast)'}(B_2(x_0))}+\|f\|_{L^{(2^\ast)'}(B_2(x_0))}\right)\\
&\leq \tilde{C}_0(D+1)\|f\|_{(2^\ast)'} \quad\text{for all }x_0\in\R^N.
\end{align*}

\qquad  Setting $C_0:=\tilde{C}_0(D+1)$, we consider a strictly decreasing sequence
$2>R_1>R_2>\ldots>R_j>R_{j+1}>\ldots>1$.
From Sobolev's embedding theorem, there is for each $1\leq t\leq 2^\ast$,
a constant $\kappa_t^{(0)}=\kappa_t^{(0)}(N,t)>0$ such that
$$
\|\Phi_k\ast f\|_{L^t(B_{R_1}(x_0))}\leq \kappa_t^{(0)} C_0 \|f\|_{(2^\ast)'},
$$
where $C_0$ is given as above, with $R=R_1$.
Choosing $t_1:=\frac{2^\ast}{p-1}$, we obtain from \eqref{eqn:f1_f2_estim_Rf}, there is some constant $D_2=D_2(N,p)>0$ such that
\begin{align*}
\|f\|_{L^{t_1}(B_{R_1}(x_0))}
&\leq D_2 \|a\|_\infty \bigl(\|\Phi_k\ast f\|_{L^{2^\ast}(B_{R_1}(x_0))}^{p-1}+\|\varphi\|_{L^{2^\ast}(B_{R_1}(x_0))}^{p-1}\bigr)\\
&\leq D_2 \|a\|_\infty\left((\kappa_{2^\ast}^{(0)} C_0)^{p-1} \|f\|_{(2^\ast)'}^{p-1}+|B_{R_1}|^{\frac1{t_1}}\|\varphi\|_\infty^{p-1}\right).
\end{align*}
\qquad It then follows as in \cite[Proof of Proposition A.1(i)]{EW1} from elliptic regularity theory that $\Phi\ast f\in W^{2,t_1}_{\text{loc}}(\R^N)$ and for some
constant $\tilde{C}_1=\tilde{C}_1(N,k,p)>0$,
\begin{align*}
\|\Phi_k\ast f&\|_{W^{2,t_1}(B_{R_2}(x_0))}
 \leq \tilde{C}_1 \left( \|\Phi_k\ast f\|_{L^{t_1}(B_{R_1}(x_0))}+\|f\|_{L^{t_1}(B_{R_1}(x_0))}\right)\\
&\ \ \leq \tilde{C}_1\Bigl[\kappa_{t_1}^{(0)}C_0\|f\|_{(2^\ast)'}
+D_2 \|a\|_\infty\left((\kappa_{2^\ast}^{(0)} C_0)^{p-1} \|f\|_{(2^\ast)'}^{p-1}
+|B_{R_1}|^{\frac1{t_1}}\|\varphi\|_\infty^{p-1}\right)\Bigr]\\
&\ \ \leq C_1\left( \|f\|_{(2^\ast)'}+\|f\|_{(2^\ast)'}^{p-1}+\|\varphi\|_\infty^{p-1}\right)\qquad\text{for all }x_0\in\R^N,
\end{align*}
where $C_1=C_1\bigl(N,k,p,\|a\|_\infty\bigr)$.
  If $t_1\geq \frac{N}{2}$, Sobolev's embedding theorem gives for each $1\leq t<\infty$ the existence of a constant
$\kappa^{(1)}_t=\kappa_t^{(1)}(N,q,t)>0$ such that
$$
\|\Phi_k\ast f\|_{L^t(B_{R_2}(x_0))}\leq \kappa^{(1)}_t C_1\left( \|f\|_{(2^\ast)'}+\|f\|_{(2^\ast)'}^{p-1}+\|\varphi\|_\infty^{p-1}\right).
$$
As a consequence, we obtain
\begin{align*}
\|f\|_{L^t(B_{R_2}(x_0))}
&\leq D_2 \|a\|_\infty\Bigl(3^{p-2}(\kappa_{t(p-1)}^{(1)} C_1)^{p-1} \left(\|f\|_{(2^\ast)'}^{p-1}+\|f\|_{(2^\ast)'}^{(p-1)^2}
+\|\varphi\|_\infty^{(p-1)^2}\right)\\&\quad +|B_{R_2}|^{\frac{p-1}{t}}\|\varphi\|_\infty^{p-1}\Bigr),
\end{align*}
for all $1\leq t<\infty$. As in \cite[Proof of Proposition A.1(i)]{EW1}, it then follows from elliptic regularity theory that 
$\Phi\ast f\in W^{2,N}_{\text{loc}}(\R^N)$, and since $R_2>1$, there exists some constant $\tilde{C}_2=\tilde{C}_2(N,k)>0$ such that
\begin{align*}
\|\Phi_k\ast f&\|_{W^{2,N}(B_1(x_0))}\leq \tilde{C}_2 \left( \|\Phi_k\ast f\|_{L^N(B_{R_2}(x_0))}+\|f\|_{L^N(B_{R_2}(x_0))}\right)\\
&\leq \tilde{C}_2\Bigl\{\kappa_N^{(1)}C_1\left(\|f\|_{(2^\ast)'}+\|f\|_{(2^\ast)'}^{p-1}+\|\varphi\|_\infty^{p-1}\right)\\
&+D_2 \|a\|_\infty\Bigl(3^{p-2}(\kappa_{N(p-1)}^{(1)} C_1)^{p-1} \left(\|f\|_{(2^\ast)'}^{p-1}+\|f\|_{(2^\ast)'}^{(p-1)^2}
+\|\varphi\|_\infty^{(p-1)^2}\right)\\
&+|B_{R_2}|^{\frac{p-1}N}\|\varphi\|_\infty^{p-1}\Bigr)\Bigr\}\\
&\leq C_2\left( \|f\|_{(2^\ast)'}+\|f\|_{(2^\ast)'}^{(p-1)^2}+\|\varphi\|_\infty^{p-1}+\|\varphi\|_\infty^{(p-1)^2}\right)
\end{align*}
for all $x_0\in\R^N$, where $C_2=C_2\bigl(N,k,p,\|a\|_\infty\bigr)$.
By Sobolev's embedding theorem, there is a constant $\kappa_\infty=\kappa_\infty(N)>0$ such that
$$
\|\Phi_k\ast f\|_{L^\infty(B_1(x_0))}
\leq  \kappa_\infty C_2\left( \|f\|_{(2^\ast)'}+\|f\|_{(2^\ast)'}^{(p-1)^2}+\|\varphi\|_\infty^{(p-1)^2}+\|\varphi\|_\infty^{p-1}\right)
$$
for all $x_0\in\R^N$. Therefore, $\Phi\ast f\in L^\infty(\R^N)$ and since $v=\Phi\ast f$, the estimate \eqref{eqn:infty-estimate} holds
with $C=2\kappa_\infty C_2$ and $m=2$.

\qquad If $t_1<\frac{N}{2}$, we infer from Sobolev's embedding theorem that
$$
\|\Phi_k\ast f\|_{L^t(B_{R_2}(x_0))}\leq \kappa^{(1)}_t C_1\left(\|f\|_{(2^\ast)'}
+\|f\|_{(2^\ast)'}^{p-1}+\|\varphi\|_\infty^{p-1}\right)
$$
for each $1\leq t\leq \frac{Nt_1}{N-2t_1}$, where $\kappa_t^{(1)}=\kappa_t^{(1)}(N,p,t)$.
Therefore, setting $t_2:=\frac{Nt_1}{(N-2t_1)(p-1)}$, we obtain from \eqref{eqn:f1_f2_estim_Rf},
\begin{align*}
&\|f\|_{L^{t_2}(B_{R_2}(x_0))}
\\&\leq D_2 \|a\|_\infty\Bigl(3^{p-2}(\kappa_{t_2(p-1)}^{(1)} C_1)^{p-1} \left(\|f\|_{(2^\ast)'}^{p-1}
+\|f\|_{(2^\ast)'}^{(p-1)^2}+\|\varphi\|_\infty^{(p-1)^2}\right)+|B_{R_2}|^{\frac{p-1}{t_2}}\|\varphi\|_\infty^{p-1}\Bigr).
\end{align*}
Using again elliptic regularity theory as before, we find that $\Phi_k\ast f\in W^{2,t_2}_{\text{loc}}(\R^N)$
and for some constant $\tilde{C}_2=\tilde{C}_2(N,k,p)>0$,
\begin{align*}
\|\Phi_k\ast f&\|_{W^{2,t_2}(B_{R_3}(x_0))}\leq \tilde{C}_2 \left( \|\Phi_k\ast f\|_{L^{t_2}(B_{R_2}(x_0))}+\|f\|_{L^{t_2}(B_{R_2}(x_0))}\right)\\
\ \ &\leq \tilde{C}_2\Bigl\{\kappa_{t_2}^{(1)}C_1\left(\|f\|_{(2^\ast)'}+\|f\|_{(2^\ast)'}^{p-1}+\|\varphi\|_\infty^{p-1}\right)\\
& \ \  \quad +D_2 \|a\|_\infty\Bigl(3^{q-2}(\kappa_{t_2(p-1)}^{(1)} C_1)^{p-1} \left(\|f\|_{(2^\ast)'}^{p-1}+\|f\|_{(2^\ast)'}^{(p-1)^2}+\|\varphi\|_\infty^{(p-1)^2}\right)\\
&\ \  \quad +|B_{R_2}|^{\frac{p-1}{t_2}}\|\varphi\|_\infty^{p-1}\Bigr)\Bigr\}\\
&\ \    \leq C_2\left( \|f\|_{(2^\ast)'}+\|f\|_{(2^\ast)'}^{(p-1)^2}+\|\varphi\|_\infty^{p-1}+\|\varphi\|_\infty^{(p-1)^2}\right),
\end{align*}
for all $x_0\in\R^N$, where $C_2=C_2\bigl(N,k,p,\|a\|_\infty\bigr)$.

\qquad Remarking that $t_2>t_1$, since $p<2^\ast$, we may iterate the procedure.
At each step we find some constant $C_j=C_j\bigl(N,k,p,\|a\|_\infty\bigr)$ such that the estimate
$$
\|\Phi_k\ast f\|_{W^{2,t_j}(B_{R_{j+1}}(x_0))}
\leq C_j\left(\|f\|_{(2^\ast)'}+\|f\|_{(2^\ast)'}^{(p-1)^j}+\|\phi\|_\infty^{p-1}+\|\phi\|_\infty^{(p-1)^j}\right)
$$
holds and where $t_j$ is defined recursively via $t_0=(2^\ast)'$ and
$t_{j+1}=\frac{Nt_j}{(N-2t_j)(p-1)}$, as long as $t_j<\frac{N}{2}$.
Since $t_{j+1}\geq \frac{t_1}{p'}\,t_j$ and since $t_1>p'$, we reach after finitely many steps
$t_\ell\geq\frac{N}{2}$, where
$\ell$ only depends on $N$ and $p$.
Since $R_j>1$ for all $j$, using the regularity properties of $\Phi $
and arguing as above, we obtain $\Phi\ast f\in W^{2,N}_{\text{loc}}(\R^N)$ as well as the estimate
$$
\|\Phi_k\ast f\|_{W^{2,N}(B_1(x_0))}\leq C_{\ell+1}\left(\|f\|_{(2^\ast)'}+\|f\|_{(2^\ast)'}^{(p-1)^{\ell+1}}
+\|\varphi\|_\infty^{p-1}+\|\varphi\|_\infty^{(p-1)^{\ell+1}}\right),  $$
where $x_0$ is any point of $\R^N$ and  $C_{\ell+1}=C_{\ell+1}\bigl(N,k,p,\|a\|_\infty\bigr)$
is independent of $x_0$.
Then, Sobolev's embedding theorem gives a constant $\kappa_\infty=\kappa_\infty(N)$ for which
$$
\|\Phi_k\ast f\|_{L^\infty(B_1(x_0))}
\leq \kappa_\infty C_{\ell+1}\left(\|f\|_{(2^\ast)'}+\|f\|_{(2^\ast)'}^{(q-1)^{\ell+1}}
+\|\varphi\|_\infty^{p-1}+\|\varphi\|_\infty^{(p-1)^{\ell+1}}\right)
$$
holds for all $x_0\in\R^N$. Hence, $\Phi\ast f\in L^\infty(\R^N)$ and choosing
$C=\kappa_\infty C_{\ell+1}$ and $m=\ell+1$ concludes the proof of \eqref{eqn:infty-estimate}.
We complete the proof. \hfill$\Box$\medskip

\bigskip

\noindent{\bf Acknowledgements:}  H. Chen is   supported by NNSF of China, No: 12071189,  12001252,
by the Jiangxi Provincial Natural Science Foundation, No: 20202BAB201005, 20202ACBL201001  and by the Alexander von Humboldt Foundation.    T. Weth is supported by the German Science Foundation (DFG) within the project WE-2821/5-2.

\bibliographystyle{amsplain}

\end{document}